\newtheorem{theorem}{Theorem}[section]
\newtheorem{lemma}[theorem]{Lemma}
\newtheorem{claim}[theorem]{Claim}
\theoremstyle{definition}
\newtheorem{defn}{Definition}[section]
\theoremstyle{remark}
\newtheorem{remark}{Remark}[section]
\newcommand\Aut{\operatorname{Aut}}
\date{}
\begin{document}

\title{Bernoulli hyper-edge percolation on $\mathbb{Z}^d$}

\author{Yinshan Chang\thanks{Address: College of Mathematics, Sichuan University, Chengdu 610065, China;
Email: ychang@scu.edu.cn;
Supported by National Natural Science Foundation of China \#11701395}}

\maketitle

\begin{abstract}
 We consider Bernoulli hyper-edge percolation on $\mathbb{Z}^d$. This model is a generalization of Bernoulli bond percolation. An edge connects exactly two vertices and a hyper-edge connects more than two vertices. As in the classical Bernoulli bond percolation, we open hyper-edges independently in a homogeneous manner with certain probabilities parameterized by a parameter $u\in[0,1]$. We discuss conditions for non-trivial phase transitions when $u$ varies. We discuss the conditions for the uniqueness of the infinite cluster. Also, we provide conditions under which the Grimmett-Marstrand type theorem holds in the supercritical regime. Such a result is a key for the study of the model in supercritical regime.
\end{abstract}

\section{Introduction}
 We consider Bernoulli hyper-edge percolation on $d$-dimensional integer lattice $\mathbb{Z}^d$ for $d\geq 3$. We start with a description of the model. By a hyper-edge $h$ on $\mathbb{Z}^d$, we mean a \emph{finite} subset of vertices $\{v_1,\ldots,v_m\}$ ($m\geq 2$) of $\mathbb{Z}^d$.  We denote by $\mathcal{H}$ the collection of hyper-edges on $\mathbb{Z}^d$. Let $\Omega=\{0,1\}^{\mathcal{H}}$ be the configuration space. For $\omega\in\Omega$, it is a boolean function on $\mathcal{H}$. We say that the hyper-edge $h$ is open with respect to the configuration $\omega$ if $\omega(h)=1$.  Similarly, it is closed if $\omega(h)=0$. Two vertices $x$ and $y$ are defined to be connected with respect to the configuration $\omega$ if there exist finitely many hyper-edges $h_1,\ldots,h_n$ such that $x\in h_1$, $y\in h_n$, $h_i\cap h_{i+1}\neq\emptyset$ for $i=1,\ldots,n-1$ and $\omega(h_j)=1$ for $j=1,\ldots,n$. In this way, $\mathbb{Z}^d$ is partitioned into finite or countable connected components. These connected components are called ``open clusters''. We assume that for each hyper-edge $h$, there is a continuous and strictly increasing function $p_h:[0,1]\to[0,1]$ such that $p_h(0)=0$ and $p_h(1)=1$. The percolation model has a parameter $u\in[0,1]$. For each parameter $u\in [0,1]$, there is a product probability measure $\mathbb{P}_{u}$ on $\Omega$ such that $\mathbb{P}_u(h\text{ is open})=p_h(u)$. We are particularly interested in the event that there exists an infinite open cluster and its probability under $\mathbb{P}_{u}$.

 Also, we need the model to be translation invariant. To be more precise, for $v\in\mathbb{Z}^d$, we denote by $\tau_v$ the map $\tau_v:\mathbb{Z}^d\to\mathbb{Z}^d$, $w\mapsto v+w$ for $w\in\mathbb{Z}^d$. We use the same notation $\tau_v$ to denote the map
 \begin{equation}\label{eq: lattice shifts}
  \begin{array}{l}
  \tau_v:\mathcal{H}\to \mathcal{H},\\
  \tau_v:\{v_1,\ldots,v_m\}\mapsto\{v+v_1,\ldots,v+v_m\}
  \end{array}
 \end{equation}
 for a hyper-edge $\{v_1,\ldots,v_m\}$.

 We assume throughout this paper that $(p_h)_{h\in \mathcal{H}}$ are translation invariant, i.e.
 \begin{equation}\label{eq: invariance of hyper-edge percolation probabilities}
  p_h(u)=p_{\tau_v h}(u)\text{ for any }v\in\mathbb{Z}^d\text{ and }u\in[0,1].
 \end{equation}

 The assumption is still too general and we make further assumptions on the particular form of $p_{h}(u)$. We assume the open probability $p_{h}(u)$ is associated with a Poisson point process of time parameter $-\ln u$. To be more precise, we assume

 \begin{equation}\label{eq: p defined by mu}
  p_{h}(u)=1-e^{\ln(1-u)\mu(\{h\})}=1-(1-u)^{\mu(\{h\})}.
 \end{equation}

 Here,
  \begin{equation}\label{eq: translation invariance of mu}
  \begin{array}{l}
   \mu\text{ is a }\sigma\text{\emph{-finite measure} on the space of hyper-edges such that}\\
   \mu(\{\tau_{v}h\})=\mu(\{h\})\text{ for all }v\in\mathbb{Z}^d, \text{i.e }\mu\text{ is \emph{translation-invariant}}.
  \end{array}
  \end{equation}

 Many results and their proofs still hold for more general form of $p_h(u)$, see Remark~\ref{rem: generalization} below. However, we decide to assume \eqref{eq: p defined by mu} throughout the paper for simplicity.

 Note that the $\sigma$-finiteness of the measure $\mu$ particularly implies the finiteness of $\mu$ for each hyper-edge, i.e for all hyper-edges $h$,
 \begin{equation}\label{eq: finiteness on one hyper-edge}
  \mu(\{h\})<+\infty.
 \end{equation}

 Furthermore, we want the model has no infinite cluster for small enough $u$. Technically, we need to control the open probability $p_{h}(u)$ for hyper-edges with large diameters: Let $B(n)$ be the box centered at $0$ with side length $2n$, i.e.

  \[B(n)=\{-n,-n+1,\ldots,n-1,n\}^d=([-n,n]\cap\mathbb{Z})^d.\]

  Let $\partial B(n)$ be the inner vertex boundary of $B(n)$, i.e.

  \[\partial B(n)=\{(x_1,x_2,\ldots,x_d)\in\mathbb{Z}^d: \max_{i=1,2,\ldots,d}|x_i|=n\}.\]

  We assume that
  \begin{equation}\label{eq: one hyper-edge annulus crossing general mu}
   \exists \lambda>1\text{ such that }\sup_{n}\mu(\{h:h\cap B(n)\neq\emptyset,h\cap \partial B([\lambda n])\neq\emptyset\})<\infty.
  \end{equation}

  In particular, together with \eqref{eq: finiteness on one hyper-edge}, this implies the local finiteness of $\mu$, i.e.,

  \begin{equation}\label{eq: local finiteness of general mu}
   \mu(\{h:h\text{ contains }0\})<\infty.
  \end{equation}

  This model is a generalization of simple random walk loop percolation model considered in \cite{loop-perc-Zd} with $\mu$ be the so-called loop measure and $1-u=e^{-\alpha}$.

  Firstly, let us describe some first properties of the model (which is known to hold for the loop percolation). The distribution on the configuration space $\Omega=\{0,1\}^{\mathcal{H}}$ is a product measure. Hence, we have the FKG inequality, the (hyper-edge version of) BKR inequality and the Russo's formula. By the same argument as in \cite[Proposition~3.2]{loop-perc-Zd}, we also have the ergodicity of this percolation model under lattice shifts. Also, there is a natural monotone coupling for different parameters $u$.

  Secondly, we discuss the existence on the phase transition. We denote by $0\leftrightarrow\infty$ the event that $0$ is contained in an unbounded open cluster. We define the critical threshold
  \[u_c=\inf\{u\in[0,1]:\mathbb{P}_u[0\leftrightarrow\infty]>0\}.\]
  The existence of phase transition means $u_{c}\in(0,1)$. To be more precise, it means that for sufficiently small $u>0$, $0$ is in a finite open cluster almost surely; for $u$ sufficiently close to $1$, with a strictly positive probability, $0$ is in an unbounded open cluster.

  By the same renormalization argument as in \cite[Lemma~4.1]{loop-perc-Zd}, we have that $u_c>0$. For the other part ``$u_c<1$'', $\mu$ should not be essentially one-dimensional, i.e.
  \begin{multline}\label{eq: not essential one-dimensional}
  \text{there are linearly independent vectors }x,y\in\mathbb{Z}^d\text{ with hyper-edges }\\
  h_1,h_2\text{ such that }\mu(\{h_1\}), \mu(\{h_2\})>0, 0,x\in h_1\text{ and }0,y\in h_2.
  \end{multline}
  The case $h_1=h_2$ is also permitted in the above definition \eqref{eq: not essential one-dimensional}. To see the sufficiency of this condition, we consider the induced finite range dependent bond percolation on the Cayley graph $x\mathbb{Z}+y\mathbb{Z}$ with $x,y$ as the set of generators. We declare an edge $\{z,z+x\}$ on $x\mathbb{Z}+y\mathbb{Z}$ to be open if the hyper-edge $\tau_zh_1$ is open; and we declare an edge $\{z,z+y\}$ to be open if the hyper-edge $\tau_{z}h_{2}$ is open. This bond percolation model has dependency between bonds if $h_1$ is a translation of $h_2$. By \cite{LiggettSchonmannStaceyMR1428500}, for $u_c$ sufficiently close to $1$, this finite range dependent bond percolation model stochastically dominates a supercritical Bernoulli bond percolation on $\mathbb{Z}^2$ from above. Hence, $u_c<1$. To summarize, we have the following result:

  \begin{theorem}\label{thm: existence of phase transition}
  This model exhibits a phase transition (i.e. $u_c\in(0,1)$) under several natural conditions, namely, Poisson nature \eqref{eq: p defined by mu}, $\sigma$-finiteness and translation-invariance of the underlying intensity measure $\mu$ \eqref{eq: translation invariance of mu}, the finiteness of the annulus crossing measures for large enough annuli \eqref{eq: one hyper-edge annulus crossing general mu} and the multi-dimensional assumption \eqref{eq: not essential one-dimensional}.
  \end{theorem}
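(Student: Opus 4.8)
The plan is to prove the two halves of $u_c\in(0,1)$ separately, since they rely on different mechanisms and different hypotheses. For $u_c>0$ the strategy is a Peierls-type / first-moment argument on a renormalized lattice, essentially copying \cite[Lemma~4.1]{loop-perc-Zd}. For $u_c<1$ the strategy is to exhibit, for $u$ close to $1$, a two-dimensional finite-range \emph{dependent} bond percolation model sitting inside the hyper-edge model, and to invoke the Liggett--Schonmann--Stacey domination theorem \cite{LiggettSchonmannStaceyMR1428500} together with supercriticality of planar Bernoulli percolation.

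For $u_c>0$: fix the constant $\lambda>1$ from \eqref{eq: one hyper-edge annulus crossing general mu} and set $C:=\sup_n \mu(\{h: h\cap B(n)\neq\emptyset,\ h\cap\partial B([\lambda n])\neq\emptyset\})<\infty$. The idea is that if $0\leftrightarrow\partial B(N)$ for large $N$, then along any open path from $0$ to $\partial B(N)$ one can extract, at a geometric sequence of scales $n_k\sim \lambda^k$, a hyper-edge that crosses the annulus between $B(n_k)$ and $\partial B([\lambda n_k])$ (a hyper-edge is a finite set, hence bounded, so it cannot by itself connect across too many scales; this is where \eqref{eq: one hyper-edge annulus crossing general mu} — not merely \eqref{eq: local finiteness of general mu} — is essential). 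The expected number of open hyper-edges crossing a fixed such annulus is at most $p_{\max}(u)\cdot C\le -\ln(1-u)\cdot C$ by \eqref{eq: p defined by mu} and a union bound; choosing $u$ small enough that this is $<1$, a union-bound over the $\sim\log N/\log\lambda$ scales still goes to $0$ as $N\to\infty$ once one is slightly more careful (one renormalizes so that the per-scale probability is summable, e.g. by grouping scales). Hence $\mathbb{P}_u[0\leftrightarrow\infty]=0$ for small $u$, so $u_c>0$. The routine bookkeeping — translating the annulus-crossing count between scales, and turning ``expected number $<1$'' into a genuinely decaying bound over many scales — is exactly the content of \cite[Lemma~4.1]{loop-perc-Zd}, which I would cite rather than reproduce.

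For $u_c<1$: invoke \eqref{eq: not essential one-dimensional} to get linearly independent $x,y\in\mathbb{Z}^d$ and hyper-edges $h_1,h_2$ with $\mu(\{h_1\}),\mu(\{h_2\})>0$, $0,x\in h_1$, $0,y\in h_2$. On the Cayley graph $G$ of the group $x\mathbb{Z}+y\mathbb{Z}\cong\mathbb{Z}^2$ with generating set $\{x,y\}$, declare the edge $\{z,z+x\}$ open iff the hyper-edge $\tau_z h_1$ is open under $\omega$, and $\{z,z+y\}$ open iff $\tau_z h_2$ is open. By translation invariance \eqref{eq: invariance of hyper-edge percolation probabilities} each such edge is open with probability $p_{h_1}(u)=1-(1-u)^{\mu(\{h_1\})}$ or $p_{h_2}(u)$, both of which tend to $1$ as $u\to1$ since the exponents are positive and finite. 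The state of an edge at $z$ depends only on $\omega$ restricted to the single hyper-edge $\tau_z h_i$, so two edge-states are independent unless the corresponding translated hyper-edges coincide; since $h_1,h_2$ are fixed finite sets, each hyper-edge $\tau_z h_i$ equals $\tau_{z'}h_j$ for only boundedly many pairs $(z',j)$, so this is a $k$-dependent field for some finite $k$. By \cite[]{LiggettSchonmannStaceyMR1428500}, a $k$-dependent field in which every edge is open with probability at least $1-\delta$, for $\delta=\delta(k)$ small enough, stochastically dominates a Bernoulli bond percolation on $G\cong\mathbb{Z}^2$ at parameter $p>p_c(\mathbb{Z}^2)=1/2$. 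Choosing $u$ close enough to $1$ makes $\delta$ as small as needed; the dominated supercritical planar percolation has an infinite cluster a.s., which is contained in an infinite open cluster of the hyper-edge model, so $\mathbb{P}_u[0\leftrightarrow\infty]>0$ and $u_c<1$.

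The main obstacle is the $u_c>0$ half: one must make sure the annulus-crossing bound \eqref{eq: one hyper-edge annulus crossing general mu} really forces decay across \emph{all} scales simultaneously, i.e. that an open connection from $0$ to $\infty$ genuinely produces an open annulus-crossing hyper-edge at infinitely many (geometrically spaced) scales, and that the expected counts can be made summable. Concretely the subtlety is that a hyper-edge spanning a huge diameter could ``skip'' a scale, so one needs the uniform-in-$n$ bound to control crossings of annuli of fixed multiplicative width and then a Borel--Cantelli argument; this is precisely why the hypothesis is stated for a fixed $\lambda>1$ uniformly over $n$ rather than just as local finiteness. Everything else — the $u_c<1$ direction, and the verification of finite-range dependence — is essentially a citation plus a short translation-invariance computation.
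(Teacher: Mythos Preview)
Your proposal is correct and follows essentially the same route as the paper: for $u_c>0$ you invoke the renormalization argument of \cite[Lemma~4.1]{loop-perc-Zd} built on the uniform annulus-crossing bound \eqref{eq: one hyper-edge annulus crossing general mu}, and for $u_c<1$ you define the induced bond process on the Cayley graph $x\mathbb{Z}+y\mathbb{Z}$ via the translates of $h_1,h_2$ and apply \cite{LiggettSchonmannStaceyMR1428500}. The paper's discussion in the paragraph preceding the theorem is exactly this, so there is nothing to add beyond noting that your dependence analysis (edges are correlated only when $\tau_z h_i=\tau_{z'}h_j$, which forces $h_1$ to be a translate of $h_2$) is slightly more explicit than the paper's one-line remark.
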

  \begin{remark}\label{rem: necessarity on the finiteness of the annulus crossing}
   If \eqref{eq: one hyper-edge annulus crossing general mu} is violated, then $u_c=0$ is possible. To be more precise, we denote by $Q_n$ the following square loop
   \[\{(x,y)\in\mathbb{Z}^2:(|x|-2^n)(|y|-2^n)=0\text{ and }|x|,|y|\leq 2^n\}\times\{0\}^{d-2}.\]
   We define $\mu(\{Q_n\})=n\times 2^{-2n}$. We require that $\mu$ is supported on the translations of $Q_n$ ($n\geq 1$). Also, we require that $\mu$ is translation invariant. Then, $\mu$ satisfies \eqref{eq: local finiteness of general mu} but not \eqref{eq: one hyper-edge annulus crossing general mu}. Moreover, in this model, we have the following claim.
   \begin{claim}\label{claim: square loop percolates}
   For all $u>0$, there exist infinite clusters almost surely.
   \end{claim}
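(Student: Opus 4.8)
\section*{Proof proposal for Claim~\ref{claim: square loop percolates}}

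The plan is to produce, on an event of positive probability, an unbounded open cluster, and then finish by the ergodicity of the model under lattice shifts: the event $\{\exists\text{ infinite open cluster}\}$ is translation invariant, hence has probability $0$ or $1$. Recall that here every hyper-edge in the support of $\mu$ is a translate $\tau_vQ_n$, with $\mathbb{P}_u(\tau_vQ_n\text{ open})=p_n:=1-(1-u)^{n2^{-2n}}$ by \eqref{eq: p defined by mu}, and distinct translates are independent. Write $c_u:=-\ln(1-u)>0$; since $n\mapsto c_un2^{-2n}$ is decreasing on $[1,\infty)$, we may fix a large $n_0=n_0(u)$ with $c_un_02^{-2n_0}\le 1$, and then $s:=c_un2^{-2n}\in[0,1]$ for all $n\ge n_0$, so $p_n=1-e^{-s}\ge s/2=\tfrac12 c_un2^{-2n}$. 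The unbounded cluster will be a chain $L_0,L_1,L_2,\dots$ of open hyper-edges in which $L_k$ is a translate of $Q_{n_0+k}$ and $L_k\cap L_{k+1}\neq\emptyset$ for every $k$: then $\bigcup_kL_k$ lies in a single open cluster, and it is unbounded because $\operatorname{diam}(Q_m)\ge 2^m\to\infty$. So it suffices to show such a chain exists with positive probability.

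I would build the chain greedily. Let $L_0$ be the first open translate of $Q_{n_0}$ in a fixed enumeration of $\mathbb{Z}^d$; this exists almost surely since $\prod_v(1-p_{n_0})=0$. Given an open $L_{k-1}$, a translate of $Q_{n_0+k-1}$, put $V_k=\{v\in\mathbb{Z}^d:\tau_vQ_{n_0+k}\cap L_{k-1}\neq\emptyset\}$ and let $L_k$ be the first open hyper-edge among $\{\tau_vQ_{n_0+k}:v\in V_k\}$, stopping the construction if there is none. The geometric input is a lower bound on $|V_k|$: both $L_{k-1}$ and any $\tau_vQ_{n_0+k}$ meeting it lie in one common translate of the plane $\mathbb{Z}^2\times\{0\}^{d-2}$, and inside that plane $V_k$ is a translate of the Minkowski sum $Q_{n_0+k-1}+Q_{n_0+k}$ (using $-Q_m=Q_m$). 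Adding a horizontal side of $Q_{n_0+k}$ to a vertical side of $Q_{n_0+k-1}$ already yields a rectangle of dimensions of order $2^{n_0+k}\times 2^{n_0+k-1}$, so $|V_k|\ge 2^{2(n_0+k)}$.

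Two independence observations make the estimate clean. First, distinct pairs $(v,m)$ give distinct hyper-edges $\tau_vQ_m$ (different $m$ because $|Q_m|\neq|Q_{m'}|$, and $Q_m$ has no nontrivial period), so all indicators $\omega(\tau_vQ_m)$ are independent under $\mathbb{P}_u$. Second, step $k$ queries only statuses of translates of $Q_{n_0+k}$, a scale untouched in steps $0,\dots,k-1$; hence, conditionally on $\mathcal{F}_{k-1}:=\sigma(\omega(\tau_vQ_m):m\le n_0+k-1)$ — which already determines $L_{k-1}$, hence $V_k$ — the variables $\{\omega(\tau_vQ_{n_0+k}):v\in V_k\}$ are i.i.d.\ Bernoulli$(p_{n_0+k})$. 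Therefore
\[
\mathbb{P}\bigl[\text{step }k\text{ fails}\mid\mathcal{F}_{k-1}\bigr]\ \le\ (1-p_{n_0+k})^{|V_k|}\ \le\ \exp\!\bigl(-\tfrac12 c_u(n_0+k)2^{-2(n_0+k)}\cdot 2^{2(n_0+k)}\bigr)\ =\ e^{-\frac12 c_u(n_0+k)} .
\]
Consequently $\mathbb{P}[\text{the chain never stops}]\ \ge\ \prod_{k\ge 1}\bigl(1-e^{-\frac12 c_u(n_0+k)}\bigr)\ \ge\ 1-\dfrac{e^{-\frac12 c_u(n_0+1)}}{1-e^{-\frac12 c_u}}$, which is strictly positive after enlarging $n_0$ if necessary. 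On this event $\bigcup_kL_k$ is an unbounded subset of one open cluster, so $\mathbb{P}_u[\exists\text{ infinite open cluster}]>0$, hence $=1$ by ergodicity; and $u>0$ was arbitrary, so $u_c=0$.

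The only real content — and the step I expect to be the main obstacle — is the choice of how fast the loop scales should grow along the chain. Keeping the scale fixed fails, since a chain of overlapping loops of a single scale is far too sparse to reach infinity; jumping to a much larger scale also fails, since near a given small loop there is typically no open loop of a much larger scale (the relevant expectation tends to $0$). Doubling the linear scale at each step is exactly the borderline case: the number of candidate translates meeting $L_{k-1}$ is then of order $(\text{smaller scale})\times(\text{larger scale})=2^{2(n_0+k)}$, which cancels the $2^{-2(n_0+k)}$ in $p_{n_0+k}$ and leaves the diverging factor $n_0+k$, making the per-step failure probabilities summable. Everything else is the bookkeeping of the Minkowski-sum count and of a filtration that prevents the greedy choices from spoiling independence.
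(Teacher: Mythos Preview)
Your argument is correct. Both your proof and the paper's exploit the same scaling idea---doubling the loop size at each stage so that the $\Theta(2^{2m})$ candidate translates exactly cancel the $2^{-2m}$ in $\mu(\{Q_m\})$, leaving the divergent factor $m$---but the implementations differ. The paper fixes in advance a \emph{deterministic} family $\mathcal{H}_n$ of $2^{2(n-1)}$ translates of $Q_{n+1}$ (those whose bottom-left corner lies in $\{2^{n-1}+1,\dots,2^n\}^2$) with the geometric property that \emph{every} loop in $\mathcal{H}_n$ meets \emph{every} loop in $\mathcal{H}_{n+1}$; the events $E_n=\{\exists h\in\mathcal{H}_n\text{ open}\}$ are then genuinely independent, $\sum_nP(E_n^c)<\infty$, and Borel--Cantelli gives $P(\liminf_nE_n)=1$ directly, without any appeal to ergodicity. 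Your greedy construction instead lets the position of each loop be random and handles the resulting dependence via the scale-filtration $\mathcal{F}_{k-1}$, obtaining only positive probability and then invoking ergodicity to upgrade to probability~$1$. The paper's route is shorter and self-contained; yours is a bit more robust (it would adapt more easily to situations where no single deterministic family of anchor boxes can be guaranteed to pairwise intersect across consecutive scales).
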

   From this example, we see that one need certain restriction on the shape of the hyper-edges in the support of $\mu$ if one wants to weaken the condition \eqref{eq: one hyper-edge annulus crossing general mu}.
  \end{remark}

  In the following, instead of the multi-dimensional assumption \eqref{eq: not essential one-dimensional}, we assume a stronger \emph{irreducibility condition}, which ensures that this percolation model could not be simply reduced to a percolation model on a sub-lattice of $\mathbb{Z}^d$:
  \begin{multline}\label{eq: irreducibility}
   \text{For each pair of vertices }x\text{ and }y\text{, there exists a finite sequence of}\\
   \text{hyper-edges }h_1,h_2,\ldots,h_n\text{ with strictly positive weight under }\mu\\
   \text{ connecting }x\text{ to }y\text{, i.e. }x\in h_1, y\in h_n, \text{for }i=1,2,\ldots,n-1,\\
    h_i\cap h_{i+1}\neq\emptyset\text{ and for }i=1,2,\ldots,n, \mu(\{h_i\})>0.
  \end{multline}

  Thirdly, we discuss the number of infinite (open) clusters. (By infinite open cluster, we mean an unbounded open cluster.) On many non-amenable graphs, the bond percolation have infinitely many infinite clusters for certain range of the parameter. While on amenable graphs like $\mathbb{Z}^{d}$, the infinite cluster must be unique. For our model, we are in the second situation.
  \begin{theorem}\label{thm: uniqueness of the infinite cluster}
   Assume \eqref{eq: p defined by mu}, \eqref{eq: translation invariance of mu} and \eqref{eq: irreducibility}. For $u\in[0,1]$,
   \[\mathbb{P}_{u}[\text{There exist more than two infinite clusters}]=0.\]
  \end{theorem}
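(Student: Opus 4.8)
The plan is to run the Burton--Keane argument, adapted to hyper-edges, in two stages. Write $N=N(\omega)$ for the number of infinite open clusters; for $u\in\{0,1\}$ one has $N\equiv0$ resp.\ $N\equiv1$, so assume $u\in(0,1)$. Since $\mathbb{P}_u$ is a product measure with $0<p_h(u)<1$ for all $h$, the model has the \emph{finite-energy property along finite families}: for every finite $\mathcal{G}\subset\mathcal{H}$ the conditional law of $\omega|_{\mathcal{G}}$ given $\mathcal{F}_{\mathcal{H}\setminus\mathcal{G}}$ (the $\sigma$-field of the states of the hyper-edges outside $\mathcal{G}$) is, a.s., the full-support product $\bigotimes_{h\in\mathcal{G}}\bigl(p_h(u)\delta_1+(1-p_h(u))\delta_0\bigr)$. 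Moreover $N\circ\tau_v=N$ and the shifts $\tau_v$ act ergodically (as recalled above), so $N$ is $\mathbb{P}_u$-a.s.\ a constant $k\in\{0,1,2,\dots\}\cup\{\infty\}$; it remains to exclude $2\le k<\infty$ and $k=\infty$.

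\emph{Excluding finite $k\ge2$.} As $k<\infty$, the increasing events $A_L=\{\text{every infinite cluster meets }B(L)\}$ have $\mathbb{P}_u(A_L)\uparrow1$; fix $L$ with $\mathbb{P}_u(A_L)>0$ and put $\mathcal{G}=\{h\in\mathcal{H}:h\subseteq B(L)\}$, a \emph{finite} set which includes $\{x,y\}$ for all $x,y\in B(L)$. Let $\Phi(\omega)$ be obtained from $\omega$ by opening all hyper-edges of $\mathcal{G}$; then $\Phi(\omega)$ depends only on $\omega|_{\mathcal{H}\setminus\mathcal{G}}$, $\Phi(\omega)\ge\omega$, and on $A_L$ the box $B(L)$ becomes one cluster of $\Phi(\omega)$ which absorbs every infinite cluster of $\omega$, while a finite modification creates no new infinite cluster; hence $N(\Phi(\omega))=1$ on $A_L$. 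Since $\{N(\Phi)=1\}$ is $\mathcal{F}_{\mathcal{H}\setminus\mathcal{G}}$-measurable with probability $\ge\mathbb{P}_u(A_L)>0$, and $\{\omega|_{\mathcal{G}}\equiv1\}$ has conditional probability $\prod_{h\in\mathcal{G}}p_h(u)>0$ and forces $\omega=\Phi(\omega)$, we obtain $\mathbb{P}_u(N=1)\ge\prod_{h\in\mathcal{G}}p_h(u)\cdot\mathbb{P}_u(N(\Phi)=1)>0$, contradicting $N\equiv k\ge2$. So $k\in\{0,1,\infty\}$.

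\emph{Excluding $k=\infty$.} Call $x$ an \emph{encounter point} of $\omega$ if $x$ lies in an infinite cluster $C$ and, after closing every hyper-edge containing $x$, the vertex set $C\setminus\{x\}$ meets at least three distinct infinite clusters. If $k=\infty$, the number of distinct infinite clusters meeting $B(L)$ increases to $\infty$ a.s., so for $L_0$ large the event that $B(L_0)$ is met by $\ge3$ distinct infinite clusters has positive probability; on it, a surgery changing only finitely many hyper-edges makes $0$ an encounter point, so finite energy gives $c:=\mathbb{P}_u(0\text{ is an encounter point})>0$ and hence, by translation invariance, $\mathbb{E}_u[\#\{\text{encounter points in }B(L)\}]=c\,|B(L)|\asymp L^d$. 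Conversely, the combinatorial tree lemma of Burton--Keane (applied to each infinite cluster $C$ as an abstract graph, two vertices adjacent when they share an open hyper-edge) produces, from its $t_C$ encounter points inside $B(L)$, at least $t_C$ pairwise vertex-disjoint infinite self-avoiding paths in $C$; each such path has a first vertex outside $B(L)$, reached along an open hyper-edge that still contains a vertex of $B(L)$, and sending each path to the last vertex it occupies in $B(L)$ before first leaving $B(L)$ gives, after summing over clusters (a vertex outside $B(L)$ lies in at most one),
\[
\#\{\text{encounter points in }B(L)\}\ \le\ \sum_{\substack{h\text{ open},\ h\cap B(L)\ne\emptyset,\ h\not\subseteq B(L)}}|h\cap B(L)|\ =:\ Y_L .
\]
Splitting the hyper-edges by $\ell^\infty$-diameter at a threshold $m=m(L)\to\infty$ with $m(L)=o(L)$, the crossing ones of diameter $\le m$ have $h\cap B(L)$ within distance $m$ of $\partial B(L)$ and contribute $\le|B(L)\setminus B(L-m)|\cdot\mathbb{E}_u[\deg_{\mathrm{open}}(0)]=O(mL^{d-1})$, where $\deg_{\mathrm{open}}(0)$ is the number of open hyper-edges containing $0$ and $\mathbb{E}_u[\deg_{\mathrm{open}}(0)]=\sum_{h\ni0}p_h(u)\le(-\ln(1-u))\,\mu(\{h:0\in h\})<\infty$ by \eqref{eq: local finiteness of general mu}; those of diameter $>m$ contribute $\le|B(L)|\sum_{h\ni0,\ \mathrm{diam}(h)>m}p_h(u)=o(L^d)$, again by \eqref{eq: local finiteness of general mu}. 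Hence $\mathbb{E}_u[Y_L]=o(L^d)$, contradicting $\mathbb{E}_u[\#\{\text{encounter points in }B(L)\}]\asymp L^d$; so $k\ne\infty$. Therefore $k\in\{0,1\}$, which in particular gives $\mathbb{P}_u[\text{there exist more than two infinite clusters}]=0$ (indeed, at most one).

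The main obstacle is the surgery producing an encounter point at $0$. Irreducibility \eqref{eq: irreducibility} only says the graph of positive-$\mu$ hyper-edges is connected, not highly connected, so on the event that $B(L_0)$ meets three infinite clusters one must carefully route three open positive-$\mu$ paths from $0$ into three of these clusters that are pairwise disjoint away from $0$ and touch those clusters only at their endpoints, then close finitely many further hyper-edges incident to $0$, so that cutting at $0$ genuinely separates the three infinite branches --- all with a \emph{finite} modification, using that the relevant clusters meet $B(L_0)$ in a finite set and that $\mathbb{Z}^d$ ($d\ge3$) has room to reroute. A related technical point, already visible in the bound for $Y_L$, is that hyper-edges may be arbitrarily large (no annulus-crossing bound \eqref{eq: one hyper-edge annulus crossing general mu} is assumed in this theorem), so the classical ``$\le|\partial B(L)|$'' count is false --- a branch need not cross $\partial B(L)$ at a boundary vertex, and a single long hyper-edge can serve several branches --- and must be replaced by the $o(L^d)$ estimate above, which is precisely where the local finiteness of $\mu$ enters.
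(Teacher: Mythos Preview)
Your Burton--Keane strategy is the same as the paper's, but your implementation via \emph{encounter points at single vertices} has genuine gaps that the paper's \emph{multifurcation boxes} are specifically designed to avoid. A minor issue first: in Step~1 you use $\prod_{h\subseteq B(L)}p_h(u)>0$, but $p_h(u)=1-(1-u)^{\mu(\{h\})}=0$ whenever $\mu(\{h\})=0$, and nothing in the hypotheses forces every pair $\{x,y\}$ to carry positive weight. This is repairable via irreducibility and translation invariance (the paper's property~(Pc): a finite set of positive-$\mu$ hyper-edges connecting the box).

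The serious problems are in the $k=\infty$ step. Your estimate $\mathbb{E}_u[Y_L]=o(L^d)$ rests on $\mathbb{E}_u[\deg_{\mathrm{open}}(0)]<\infty$, hence on $\mu(\{h:0\in h\})<\infty$. But \eqref{eq: local finiteness of general mu} is \emph{derived} in the paper from the annulus-crossing hypothesis~\eqref{eq: one hyper-edge annulus crossing general mu}, and Theorem~\ref{thm: uniqueness of the infinite cluster} does \emph{not} assume~\eqref{eq: one hyper-edge annulus crossing general mu}; under only \eqref{eq: p defined by mu}, \eqref{eq: translation invariance of mu}, \eqref{eq: irreducibility} one can have $\mu(\{h:0\in h\})=\infty$, and then your counting bound collapses. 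Separately, the surgery you yourself flag as the ``main obstacle'' is not carried out: irreducibility makes the positive-$\mu$ hypergraph connected, not $3$-connected, so the existence of three hyper-edge paths from $0$ that are vertex-disjoint away from $0$, avoid the wrong clusters, and never revisit $0$ after their first hyper-edge requires a real argument you have not supplied. The paper bypasses both difficulties by working with boxes: a fixed \emph{finite} family $\mathcal{H}(B(x,L-K),K)$ of positive-$\mu$ hyper-edges (from~(Pc)) is what one opens to merge and closes to split --- no disjoint-path routing and no open-degree bound are needed --- and the count compares the number of multifurcation boxes directly to $|\partial B(4nL)|$ via a partition-compatibility lemma. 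The paper's closing remark in Section~\ref{sect: uniqueness of the infinite cluster} in fact notes that even a trifurcation approach is awkward here, which is precisely why multifurcation boxes were introduced.
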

  The proof is an adaptation of Burton-Keane argument \cite{BurtonKeaneMR990777}, which is given in Section~\ref{sect: uniqueness of the infinite cluster}.

  Fourthly, we consider the behavior of the model in the supercritical regime, i.e for models with $u>u_c$. For technical reasons, we need to assume the invariance of $\mu$ under symmetry of $\mathbb{Z}^d$:
  \begin{equation}\label{eq: mu invariance under aut Zd}
   \forall \varphi\in \Aut(\mathbb{Z}^d)\text{ and a hyper-edge }h,\quad \mu(\{\varphi(h)\})=\mu(\{h\}),
  \end{equation}
  where $\Aut(\mathbb{Z}^d)$ is the automorphism group of the graph $\mathbb{Z}^d$. Note that the symmetry \eqref{eq: mu invariance under aut Zd} actually implies the irreducibility \eqref{eq: irreducibility}. (A hyper-edge contains at least two different vertices by definition.)

  For the subcritical regime $u<u_c$, the geometric properties of the open clusters depends heavily on the behavior of the measure $\mu$. However, in contrast, the behavior of the clusters in the supercritical regime has less dependence on $\mu$. Roughly speaking, for all such $\mu$, the infinite cluster looks like $\mathbb{Z}^d$ in many perspectives like graph distance, volume growth and so on.

  In the classical bond percolation, such results are proved by using a dynamic renormalization \cite{GrimmettMarstrandMR1068308}. A key result states that the original percolation problem is the limit of its slab versions. To be more precise, we define the slab percolation for our model. Consider a configuration $\omega\in\{0,1\}^{\mathcal{H}}$. We define the truncated configuration $\omega^{\leq L}$ by removing all hyper-edges outside of a two dimensional wide slab.
  \begin{equation}
   \omega^{\leq L}(h)=\left\{\begin{array}{ll}
    \omega(h), & \text{if }h\text{ belongs to }\mathbb{Z}_{\geq 0}^{2}\times \{1,2,\ldots,L\}^{d-2};\\
    0, & \text{otherwise}.
   \end{array}\right.
  \end{equation}

  \begin{theorem}\label{thm: grimmett marstrand}
   We assume $d\geq 3$ and conditions \eqref{eq: p defined by mu}, \eqref{eq: translation invariance of mu}, \eqref{eq: one hyper-edge annulus crossing general mu} and \eqref{eq: mu invariance under aut Zd}. For $u>u_c$, there exists $L=L(u)$ large enough such that there exist infinite clusters with respect to the truncated configuration $\omega^{\leq L}$ almost surely.
  \end{theorem}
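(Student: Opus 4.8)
The plan is to adapt the Grimmett--Marstrand dynamic renormalization scheme \cite{GrimmettMarstrandMR1068308} to the hyper-edge setting, following the general strategy already employed for loop percolation in \cite{loop-perc-Zd}. Fix $u>u_c$, and pick an intermediate parameter $u'$ with $u_c<u'<u$; the monotone coupling lets us work in the "two-parameter" picture where we first reveal a sparse infinite cluster at level $u'$ and then use the extra randomness in $(u',u]$ to push connections into a slab. The target is a renormalized site percolation on $\mathbb{Z}^2$ (embedded in the first two coordinates): we declare a box of side $N$ (at macroscopic scale) \emph{good} if inside the corresponding slab $\mathbb{Z}^2\times\{1,\dots,L\}^{d-2}$ there is a large open cluster that crosses the box and, crucially, makes "good" connections to the analogous clusters in the neighboring boxes. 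If the probability of a box being good can be made arbitrarily close to $1$ by choosing $N$ (and then $L$) large, then by a Liggett--Schonmann--Stacey-type comparison \cite{LiggettSchonmannStaceyMR1428500} (the dependence here is finite-range once we truncate to the slab) the renormalized process dominates supercritical Bernoulli site percolation on $\mathbb{Z}^2$, which percolates; unwinding the renormalization gives an infinite cluster in $\omega^{\le L}$.

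The heart of the argument is the \emph{local modification / finite-energy} step that produces the good connections. Here is where I would spend the most care. Starting from the existence of an infinite cluster at level $u'$, one uses translation-ergodicity and a square-root trick (FKG) to find, with probability close to $1$, a long open path from a face of a mesoscopic box to "deep inside" the box. The renormalization then requires joining two such incoming paths from adjacent boxes within the slab. One reveals the configuration away from a bounded bridging region, and then argues that with probability bounded below (depending only on $u-u'$ and the size of the bridging region, not on the revealed configuration, since $\mathbb{P}_{u}$ is a product measure) one can open a prescribed finite collection of hyper-edges that connects the two path-endpoints through the slab. The irreducibility consequence of \eqref{eq: mu invariance under aut Zd} guarantees that such a finite connecting family of positive-$\mu$ hyper-edges exists between any two prescribed vertices; the symmetry \eqref{eq: mu invariance under aut Zd} lets us place this bridge in any of the coordinate directions needed by the renormalization. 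Combined with \eqref{eq: one hyper-edge annulus crossing general mu}, which controls how far hyper-edges reach and thus keeps the "boundary effects" of a mesoscopic box genuinely local, one gets the required uniform lower bound on the success probability of each bridging attempt.

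The remaining steps are the standard bookkeeping of the Grimmett--Marstrand induction. One sets up a sequence of scales and shows by induction that the "seed" cluster inside a slab of bounded width can be grown outward in the two slab-directions: at each stage one uses the crossing estimate plus a bounded number of independent bridging attempts (independent because they occur in disjoint bounded regions and $\mathbb{P}_u$ is a product measure) to extend the cluster to the next scale, absorbing the failure probabilities into a convergent sum. The key quantitative input is that the single-scale success probability tends to $1$, which in turn reduces — via the square-root trick applied to the four possible "long crossings" of an annulus — to the statement $\theta(u')>0$, i.e. the definition of $u'>u_c$. Once the renormalized process is shown to dominate supercritical site percolation on $\mathbb{Z}^2$, one fixes the slab width $L$ at whatever value the induction forced (it depends on $u$ through $u-u'$ and through the number of bridging hyper-edges, both finite), and concludes.

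I expect the main obstacle to be the \emph{geometric control of hyper-edge reach} in the local-modification step: unlike ordinary bond percolation, a single open hyper-edge can connect a bounded bridging region to far-away vertices, so "revealing the configuration outside a bounded box" is subtler — one must reveal all hyper-edges that meet the box, and \eqref{eq: one hyper-edge annulus crossing general mu} is exactly what makes this revealed information have finite total $\mu$-mass and hence makes the conditional bridging probability uniformly positive. Making this compatibility between "the cluster we grow" and "the hyper-edges we must reveal to do a local modification" precise — in particular ensuring the slab width $L$ chosen to accommodate the bridges also accommodates the reach of the revealed hyper-edges crossing the slab boundary — is the technical crux, and I would handle it by working with slabs slightly wider than the nominal renormalization width and a version of \eqref{eq: one hyper-edge annulus crossing general mu} with $\lambda$ tuned to the ratio of the two widths.
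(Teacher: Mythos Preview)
Your proposal is correct and follows essentially the same Grimmett--Marstrand dynamic renormalization that the paper carries out (the paper in fact adapts the loop-percolation version in \cite{ChangMR3692311} rather than \cite{loop-perc-Zd}). Two execution-level points where the paper's proof diverges from your outline and that you would run into when writing details: (i) the seed event cannot be ``every pair of vertices in $B(x,m)$ is connected by hyper-edges inside $B(x,m)$'', because with no constraint on hyper-edge shapes the corner vertices of a box may be unreachable from within; the paper instead defines seeds on modified boxes $B(x,m,c)$ with the $2^d$ corners excised, and this forces a small combinatorial covering argument when placing seeds on the face $T(n)$; (ii) the comparison with $2$-dimensional site percolation is done via the sequential exploration intrinsic to dynamic renormalization (the conditional probability of finding the next seed, given the exploration history, is uniformly close to~$1$ thanks to the sprinkling Lemma~\ref{lem: sprinkling for hyper-edge percolation}), not via Liggett--Schonmann--Stacey, since the location of the next box is random and depends on the past. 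Your identification of \eqref{eq: one hyper-edge annulus crossing general mu} as the locality control is exactly right; the paper packages it as an intermediate scale $f(n)\to\infty$ with vanishing $\mu$-mass of hyper-edges reaching from $B(f(n))$ to $\partial B(n)$.
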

  The proof is an adaptation of Grimmett-Marstrand theorem, which is given in Section~\ref{sect: grimmett marstrand}. From this result for the existence of percolation in slabs, we may obtain many properties on finite clusters and the infinite cluster in the supercritical regime, just like in the case of loop percolation \cite{ChangMR3692311}. For example, we have the exponential decay on one-arm connectivity for finite clusters in the supercritical regime.

  \begin{remark}
   In the loop percolation, for $d=2$, Grimmett-Marstrand type theorem may not hold, see \cite[Section~6]{ChangMR3692311}. As the loop percolation is a special case of the hyper-edge percolation, Theorem~\ref{thm: grimmett marstrand} doesn't necessarily hold for $d=2$.
  \end{remark}

  Finally, we would like to remark that the condition \eqref{eq: p defined by mu} could be weakened in various way for various results.
  \begin{remark}\label{rem: generalization}
   Write $p_h(u)=1-(1-u)^{\mu(u,h)}$ for $u\in(0,1)$. Fix $u\in(0,1)$. Assume the translation-invariance and \eqref{eq: irreducibility} for $\mu(u,h)$ instead of $\mu$. (We allow $\mu(u,h)=+\infty$ for some hyper-edge $h$.) Then, we still have
   \[\mathbb{P}_{u}[\text{There exist more than two infinite clusters}]=0.\]
   Assume the translation-invariance and \eqref{eq: not essential one-dimensional} for $\mu(u,h)$ instead of $\mu$. Then, $u_c<1$. Fix some $\varepsilon>0$, define $\mu^{(\varepsilon)}(h)=\sup_{u\in(0,\varepsilon)}\mu(u,h)$. Suppose \eqref{eq: translation invariance of mu} and \eqref{eq: one hyper-edge annulus crossing general mu} hold for $\mu^{(\varepsilon)}$ instead of $\mu$. Then, $u_c>0$.

   All the corresponding proofs are essentially the same.

   For Theorem~\ref{thm: grimmett marstrand}, it suffices to assume that $\forall v>u_c$, $\exists u\in(u_c,v)$ such that
   \begin{equation}\label{eq: gm generalization condition}
    \Delta=\inf_{h\in\mathcal{H}}\frac{\ln(1-p_h(v))}{\ln(1-p_h(u))}-1>0.
   \end{equation}
   Indeed, for hyper-edge percolation models under the assumption \eqref{eq: p defined by mu}, we call it the model of \emph{Poisson type}. For a general model with parameter $v$, under the assumption \eqref{eq: gm generalization condition}, it stochastically dominates the model of Poisson type with the parameter $1-(1-u)^{\Delta+1}>u$ and the intensity measure $\mu(u,h)$. This model of Poisson type is in the supercritical regime and Theorem~\ref{thm: grimmett marstrand} holds. By the stochastic dominance, Theorem~\ref{thm: grimmett marstrand} also holds for the general model with the parameter $v$.
  \end{remark}

\emph{Organization of the paper:} We fix the notation in Section~\ref{sect: notation}. In Section~\ref{sect: uniqueness of the infinite cluster}, we prove Theorem~\ref{thm: uniqueness of the infinite cluster}. In Section~\ref{sect: grimmett marstrand}, we prove Theorem~\ref{thm: grimmett marstrand}.

\section{Definitions and notation}\label{sect: notation}
We fix the notation:
\begin{itemize}
 \item $B(n)=\{-n,-n+1,\ldots,n\}^d$ is the box of side length $2n$ centered at the origin and $B(x,n)=x+B(n)$ is the box of side length $2n$ centered at the vertex $x$ for $x\in\mathbb{Z}^d$ and $n\geq 0$.
 \item $\partial B(n)=\{x\in\mathbb{Z}^d:||x||_{\infty}=n \}$ and $\partial B(x,n)=x+\partial B(n)$ is the inner vertex boundary of $B(x,n)$ for $x\in\mathbb{Z}^d$ and $n\geq 0$.
 \item $F(n)=\{x\in\partial B(n):x_1=n\}$ is a face of the box $B(n)$.
 \item $T(n)=\{x\in\partial B(n):x_1=n,x_j\geq 0\text{ for }j\geq 2\}$.
 \item $T(m,n)=\bigcup\limits_{j=0}^{2m}\{je_1+T(n)\}$, where $e_1=(1,0,\ldots,0)\in\mathbb{Z}^d$.
 \item For a vertex $x\in\mathbb{Z}^d$ and a hyper-edge $h$, we say $x$ is connected to $h$ and write $x\leftrightarrow h$ if there exist hyper-edges $h_1,h_2,\ldots,h_n$ such that $x\in h_1$, $h_i\cap h_{i+1}\neq\emptyset$ for $i=1,2,\ldots,n-1$ and $h_n\cap h\neq\emptyset$.
 \item For a vertex $x\in\mathbb{Z}^d$ and a collection of hyper-edges $\mathcal{H}$, we say $x$ is connected to $\mathcal{H}$ and write $x\leftrightarrow\mathcal{H}$ if there exists $h\in\mathcal{H}$ such that $x\leftrightarrow h$.
 \item For a set $P\subset\mathbb{Z}^d$ of vertices and a collection of hyper-edges $\mathcal{H}$, we say $P$ is connected to $\mathcal{H}$ and write $P\leftrightarrow \mathcal{H}$ if $\exists x\in P$ and $h\in\mathcal{H}$ such that $x\leftrightarrow h$.
\end{itemize}

\section{Uniqueness of the infinite cluster}\label{sect: uniqueness of the infinite cluster}
 The proof is an adaptation of Burton-Keane argument \cite{BurtonKeaneMR990777}.
 Note that we have the following properties:
  \begin{enumerate}
   \item[(Pa)] The model is translation-invariant and ergodic under lattice shifts, which particularly implies that the number of infinite clusters is almost surely a constant. In particular, the probability of the existence of infinite open clusters is either $0$ or $1$.
   \item[(Pb)] If the probability of the existence of infinite open clusters is strictly positive for some $u\in[0,1]$, then \[\lim_{m\rightarrow\infty}\mathbb{P}[B(m)\leftrightarrow\infty]=1,\]
       where $B(m)\leftrightarrow\infty$ means $B(m)$ has non-empty intersection with an infinite (unbounded) open cluster. Moreover, if there exist infinitely many clusters with probability $1$, then for all $k\geq 1$,
       \[\lim_{n\rightarrow\infty}\mathbb{P}[B(n)\text{ intersects at least }k\text{ disjoint infinite clusters}]=1.\]
   \item[(Pc)] By \eqref{eq: translation invariance of mu} and \eqref{eq: irreducibility} for $x=0$ and $y=\pm e_j$ for $j=1,2,\ldots,d$, there exists $K=K(\mu)<\infty$ such that for all finite connected vertex sets $F$ of $\mathbb{Z}^d$, there exists a finite collection $(h_i)_{i\in I}$ of hyper-edges such that $\min_{i\in I}\mu(\{h_i\})>0$, such that for all $x,y\in F$, they are connected by a sequence of hyper-edges in $(h_i)_{i\in I}$. Besides, all $h_i$ ($i\in I$) are within distance $K$ from $F$. Furthermore, for any two different hyper-edges in the collection $(h_i)_{i\in I}$, they are connected by a sequence of hyper-edges in $(h_i)_{i\in I}$. We denote the finite collection of hyper-edges by $\mathcal{H}(F,K)$.
   \item[(Pd)] Suppose that $\mathcal{H}_{1}$ is a finite set of hyper-edges with strictly positive weights under $\mu$. Let $c\in\{0,1\}^{\mathcal{H}_{1}}$ be a fixed configuration on the finite set of hyper-edges $\mathcal{H}_{1}$. Let $1(\mathcal{H}_1)$ be the configuration $(1,1,\ldots,1)$, i.e. it requires each hyper-edges in $\mathcal{H}_{1}$ to be open. For a configuration $\omega\in\{0,1\}^{\mathcal{H}}$ on the whole space, let $\omega|_{\mathcal{H}_{1}}$ be its restriction on $\mathcal{H}_{1}$. Then, for $u>0$, we have that
       \[\mathbb{P}_{u}(\omega|_{\mathcal{H}_{1}}=1(\mathcal{H}_{1}))/\mathbb{P}(\omega|_{\mathcal{H}_{1}}=c)>0\text{ if }P(\omega|_{\mathcal{H}_{1}}=c)>0.\]
  \end{enumerate}
  We fix some $u\in(0,1)$. (The cases $u=0$ and $u=1$ are trivial.) By (Pa), we see that the number of infinite clusters is almost surely a constant $k$. Firstly, we show that $k$ must be $0$, $1$ or $\infty$. Assume that the number of infinite clusters almost surely equals to a finite $k\geq 1$. Then, for large enough $n$, with positive probability, the box $B(n)$ intersects with all the infinite clusters. We denote this event by $F$. Using (Pc), there exists a finite set of hyper-edges $\mathcal{H}(B(n),K)$ with positive weights under $\mu$ such that for each pair of vertices $x$ and $y$ in $B(n)$, they are connected via the hyperedges in $\mathcal{H}(B(n),K)$. Denote by $E$ the event that the edges in $\mathcal{H}(B(n),K)$ are all open. Then, by (Pd), by the independence between the status of the hyper-edges in $\mathcal{H}(B(n),K)$ and those outside of $\mathcal{H}(B(n),K)$ and by the definition of $F$, we also have $P(EF)>0$. Note that $EF$ implies that there exists a unique infinite cluster. By ergodicity, the number of infinite clusters must equal to $1$ almost surely.

  Next, we need to rule out the possibility of infinitely many infinite clusters. We will prove it by contradiction. Assume that there exist infinite many infinite clusters with probability $1$. In the original argument of Burton and Keane \cite{BurtonKeaneMR990777}, they considered trifurcation points. To adapt to our models, we use multifurcation boxes instead of trifurcation points. By multifurcation box of side length $2L$, we mean a box $B=B(x,L)$ with the following properties:
  \begin{enumerate}
    \item[(1)] The hyper-edges in the set $\mathcal{H}(B(x,L-K),K)$ are all open, where $K=K(\mu)$ and $\mathcal{H}(F,K)$ with $F=B(x,L-K)$ are the same as in the property (Pc) at the beginning of the present section.
    \item[(2)] The box $B(x,L-K)$ intersects with some infinite cluster $C$ and by declaring all hyper-edges in $\mathcal{H}(B(x,L-K),K)$ to be closed, the infinite cluster $C$ breaks into more than three disjoint infinite clusters.
  \end{enumerate}
  We claim that there exists $L<\infty$ such that
  \[\mathbb{P}[B(L)\text{ is a multifurcation box}]=p(L)>0.\]
  Indeed, we take $K=K(\mu)$ in (Pc). By taking a large enough $L$,
  \[\mathbb{P}[B(L-K)\text{ intersects with more than three disjoint infinite clusters}]>0.\]
  Denote by $G$ the event that $B(L-K)$ intersects with more than three disjoint infinite clusters. Assume $G$ happens. If we change all the hyperedges in $\mathcal{H}(B(L-K),K)$ to be open, then these disjoint infinite clusters merge into one infinite cluster $C$. Afterwards, if we change all the edges in $\mathcal{H}(B(L-K),K)$ to be closed, then $C$ breaks into more than three infinite clusters intersecting $B(L)$. Hence, by (Pd),
  \[\mathbb{P}[B(L)\text{ is a multifurcation box}]=p(L)>0.\]

  Take a large enough natural number $n$ such that $\sharp\partial B(4nL)\leq \frac{1}{2}p(L)n^d$ and consider the following disjoint boxes inside $B(4nL)$:
  \begin{equation}
   \mathcal{B}_n=\{B(3L\cdot x,L):x\in [-n,n]^d\cap\mathbb{Z}^d\}.
  \end{equation}

  For a multifurcation box $B(3L\cdot x,L)\in\mathcal{B}_n$ inside $B(4nL)$, define its \emph{offspring}
  \begin{equation*}
   Y_{x}=\{x\in\partial B(4nL):x\leftrightarrow \mathcal{H}(B(x,L-K),K)\}.
  \end{equation*}
  We have the following claim:
  \begin{claim}\label{claim: disjoint or identical}
   For different multifurcation boxes $B(3L\cdot x,L)$ and $B(3L\cdot y,L)$, their offsprings either coincide or be disjoint, i.e. $Y_{x}=Y_{y}$ or $Y_{x}\cap Y_{y}=\emptyset$.
  \end{claim}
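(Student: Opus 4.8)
The plan is to reduce the dichotomy to transitivity of open connectivity, exploiting that inside a multifurcation box the relevant collection of hyper-edges is entirely open and internally connected. Write $\mathcal{H}_{x}=\mathcal{H}(B(3L\cdot x,L-K),K)$ and $\mathcal{H}_{y}=\mathcal{H}(B(3L\cdot y,L-K),K)$ for the two hubs appearing in the definitions of $Y_{x}$ and $Y_{y}$. Since both boxes are multifurcation boxes, condition~(1) of that definition gives that every hyper-edge of $\mathcal{H}_{x}$ and of $\mathcal{H}_{y}$ is open, and (Pc) gives that each hub is internally open-connected: any two hyper-edges of $\mathcal{H}_{x}$ are joined by a chain of open hyper-edges lying in $\mathcal{H}_{x}$, and likewise for $\mathcal{H}_{y}$. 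Since $\mathcal{H}(B(3L\cdot x,L-K),K)$ also connects every pair of vertices of $B(3L\cdot x,L-K)$, it follows that $\mathcal{H}_{x}$ lies in a single open cluster $C_{x}$ and $\mathcal{H}_{y}$ in a single open cluster $C_{y}$, and by condition~(2) both $C_{x}$ and $C_{y}$ are infinite.

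First I would dispose of the case $C_{x}\neq C_{y}$. If some $z\in\partial B(4nL)$ belonged to $Y_{x}\cap Y_{y}$, then $z$ would be joined by a chain of open hyper-edges to $\mathcal{H}_{x}$ and, separately, by a chain of open hyper-edges to $\mathcal{H}_{y}$; concatenating these at $z$ would put $\mathcal{H}_{x}$ and $\mathcal{H}_{y}$ in a common open cluster, forcing $C_{x}=C_{y}$, a contradiction. Hence $Y_{x}\cap Y_{y}=\emptyset$ in this case.

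Second I would treat the case $C_{x}=C_{y}$ and show $Y_{x}=Y_{y}$. In this case there is a chain of open hyper-edges joining some hyper-edge of $\mathcal{H}_{x}$ to some hyper-edge of $\mathcal{H}_{y}$. Given any $w\in Y_{x}$, take a chain of open hyper-edges from $w$ to $\mathcal{H}_{x}$; splice it to the $\mathcal{H}_{x}$--$\mathcal{H}_{y}$ chain, bridging the two endpoints inside the hub by the internal open-connectivity of $\mathcal{H}_{x}$ from (Pc), to obtain a chain of open hyper-edges from $w$ to $\mathcal{H}_{y}$. Thus $w\in Y_{y}$, so $Y_{x}\subseteq Y_{y}$, and the symmetric argument gives $Y_{y}\subseteq Y_{x}$, hence $Y_{x}=Y_{y}$. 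Combining the two cases proves the claim.

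I do not expect a serious obstacle here: the content is essentially concatenation of open paths. The one point that needs care is the splicing step, where one must make sure the two open chains genuinely meet \emph{within} the hub rather than merely ``near'' it; this is exactly what the internal open-connectivity of $\mathcal{H}(F,K)$ in (Pc) provides, which is legitimate to invoke because all hub hyper-edges are open in a multifurcation box. One should also keep in mind throughout that $\leftrightarrow$ in the definition of the offspring refers to connectivity through \emph{open} hyper-edges of the current configuration — this is what makes the hubs of distinct open clusters mutually unreachable from a common boundary vertex and so drives the case split above.
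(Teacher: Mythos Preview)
Your proof is correct and essentially the same as the paper's. The paper argues directly by assuming $Y_{x}\cap Y_{y}\neq\emptyset$, taking $z$ in the intersection, and then showing any $u\in Y_{x}$ reaches $\mathcal{H}_{y}$ by concatenating the $u$--$\mathcal{H}_{x}$ chain, the internal (Pc) connection inside $\mathcal{H}_{x}$, and the $\mathcal{H}_{x}$--$\mathcal{H}_{y}$ chain through $z$; your case split on $C_{x}=C_{y}$ versus $C_{x}\neq C_{y}$ is just a repackaging of the same concatenation argument, and your caveat about the splicing step via (Pc) matches exactly the point the paper uses.
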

  \begin{proof}[Proof Claim~\ref{claim: disjoint or identical}]
   Assume $Y_{x}\cap Y_{y}\neq\emptyset$. Suppose $z\in Y_{x}\cap Y_{y}$. Then, there exist two hyper-edges $h_{1}\in \mathcal{H}(B(3Lx,L-K),K)$ and $h_2\in\mathcal{H}(B(3Ly,L-K),K)$ such that $z$ is connected to both $h_1$ and $h_2$. Hence, $h_1$ is connected to $h_2$. Consider $u\in Y_{x}$. Then, there exists $h_3\in\mathcal{H}(B(3Lx,L-K),K)$ such that $u$ is connected to $h_3$. Either $h_1=h_3$ or $h_1$ is connected to $h_3$ within $\mathcal{H}(B(3Lx,L-K),K)$ by the property (Pc). Therefore, $h_3$ is also connected to $h_2$. But $u$ is connected to $h_3$. So, the vertex $u$ is connected to $h_2$. Hence, by definition, we have that $u\in Y_{y}$. Consequently, $Y_{x}\subset Y_{y}$. Similarly, $Y_{y}\subset Y_{x}$ and we must have $Y_{x}=Y_{y}$.
  \end{proof}
  Next, for a multifurcation box $B(3L\cdot x,L)\in\mathcal{B}_n$ inside $B(4nL)$, we define a partition of $\{P_{x,1},P_{x,2},\ldots,P_{x,r(x)}\}$ of $Y_{x}$ as follows: Two vertices $u,v\in Y_{x}$ are said to be in the same cluster if they can still be connected via a sequence of open hyper-edges after we declare all the hyper-edges in $\mathcal{H}(B(3Lx,L-K),K)$ to be closed. Then, by definition of multifurcation boxes, we have $r(x)\geq 3$. For two different multifurcation boxes $B(3L\cdot x,L)$ and $B(3L\cdot y,L)$, if $Y_{x}=Y_{y}$, the corresponding partitions are \emph{compatible} in the following sense.

  Let $Y$ be a finite set with at least three elements. A multi-partition $\Pi=\{P_1,P_2,\ldots,P_{r}\}$ is a partition of $Y$ with $r\geq 3$. Given two multi-partitions $\Pi^{(1)}=\{P^{(1)}_1,P^{(1)}_2,\ldots,P^{(1)}_{r_1}\}$ and $\Pi^{(2)}=\{P^{(2)}_1,P^{(2)}_2,\ldots,P^{(2)}_{r_2}\}$ of $Y$, we say that they are \emph{compatible} if there exists an ordering of their elements such that $P^{(1)}_1\supset P^{(2)}_{2}\cup\ldots\cup P^{(2)}_{r_2}$. (In this case, we also have $P^{(2)}_1\supset P^{(1)}_{2}\cup\ldots\cup P^{(1)}_{r_1}$. Or equivalently, $P^{(1)}_1\cup P^{(2)}_1=Y$.)

  Then, we have the following claim.
  \begin{claim}\label{claim: compatible}
   For two different multifurcation boxes $B(3L\cdot x,L)$ and $B(3L\cdot y,L)$, if $Y_{x}=Y_{y}$, the corresponding partitions $\{P_{x,i}\}_{i=1,2,\ldots,r(x)}$ and $\{P_{y,j}\}_{j=1,2,\ldots,r(y)}$ are compatible.
  \end{claim}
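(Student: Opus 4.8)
Fix a configuration $\omega$ under which both $B(3Lx,L)$ and $B(3Ly,L)$ are multifurcation boxes and $Y_{x}=Y_{y}=:Y$. Write $\mathcal{A}=\mathcal{H}(B(3Lx,L-K),K)$, $\mathcal{B}=\mathcal{H}(B(3Ly,L-K),K)$, and let $\mathbf{A}=\bigcup_{h\in\mathcal{A}}h$, $\mathbf{B}=\bigcup_{h\in\mathcal{B}}h$ be the corresponding vertex sets. The first step is to record the geometry. Since $x\neq y$ lie in $[-n,n]^{d}\cap\mathbb{Z}^{d}$, the centres $3Lx$ and $3Ly$ are at $\ell^{\infty}$-distance at least $3L$, and by (Pc) every hyper-edge of $\mathcal{A}$ (resp.\ $\mathcal{B}$) lies within distance $K$ of $B(3Lx,L-K)$ (resp.\ $B(3Ly,L-K)$); hence $\mathbf{A}\subset B(3Lx,L)$, $\mathbf{B}\subset B(3Ly,L)$, so that $\mathbf{A}\cap\mathbf{B}=\emptyset$, $\mathcal{A}\cap\mathcal{B}=\emptyset$, and, for the large $n$ under consideration, $(\mathbf{A}\cup\mathbf{B})\cap\partial B(4nL)=\emptyset$; in particular $Y\cap(\mathbf{A}\cup\mathbf{B})=\emptyset$. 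By property~(1) of a multifurcation box all hyper-edges of $\mathcal{A}$ and of $\mathcal{B}$ are $\omega$-open, and by the internal-connectivity clause of (Pc) each of $\mathbf{A},\mathbf{B}$ lies in a single $\omega$-cluster.

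Next I would set up the two surgeries. Let $\omega^{\mathcal{A}}$ (resp.\ $\omega^{\mathcal{B}}$) be obtained from $\omega$ by closing all hyper-edges of $\mathcal{A}$ (resp.\ $\mathcal{B}$). Since $\mathcal{A}\cap\mathcal{B}=\emptyset$, all hyper-edges of $\mathcal{B}$ remain open in $\omega^{\mathcal{A}}$, so (again by (Pc)) $\mathbf{B}$ lies in one $\omega^{\mathcal{A}}$-cluster $C$; symmetrically $\mathbf{A}$ lies in one $\omega^{\mathcal{B}}$-cluster $C'$. By construction $\{P_{x,i}\}$ is the partition of $Y$ by traces of $\omega^{\mathcal{A}}$-clusters and $\{P_{y,j}\}$ the one by traces of $\omega^{\mathcal{B}}$-clusters; put $P_{x,1}:=Y\cap C=\{z\in Y:z\leftrightarrow\mathcal{B}\text{ in }\omega^{\mathcal{A}}\}$ and $P_{y,1}:=Y\cap C'=\{z\in Y:z\leftrightarrow\mathcal{A}\text{ in }\omega^{\mathcal{B}}\}$, each being the trace of a single cluster (a priori possibly empty). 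The plan reduces to proving $P_{x,1}\cup P_{y,1}=Y$, which by the remark after the definition of compatible multi-partitions is exactly the compatibility statement (with $P_{x,1}$ and $P_{y,1}$ as the two distinguished blocks).

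The core step is a path-surgery dichotomy: every $z\in Y$ satisfies $z\leftrightarrow\mathcal{B}$ in $\omega^{\mathcal{A}}$ or $z\leftrightarrow\mathcal{A}$ in $\omega^{\mathcal{B}}$. Take $z\in Y\setminus P_{x,1}$. As $z\in Y=Y_{y}$, there is an $\omega$-open sequence of hyper-edges $h_{1},\dots,h_{m}$ with $z\in h_{1}$ and $h_{m}\cap b\neq\emptyset$ for some $b\in\mathcal{B}$. Let $k$ be the least index with $h_{k}\cap(\mathbf{A}\cup\mathbf{B})\neq\emptyset$; it exists because $h_{m}$ meets $\mathbf{B}$. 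None of $h_{1},\dots,h_{k-1}$ meets $\mathbf{A}\cup\mathbf{B}$, hence none lies in $\mathcal{A}\cup\mathcal{B}$ (a member of $\mathcal{A}$ is a nonempty subset of $\mathbf{A}$, likewise for $\mathcal{B}$); and $h_{1}\notin\mathcal{A}\cup\mathcal{B}$ also when $k=1$, because $z\in h_{1}$ while $z\notin\mathbf{A}\cup\mathbf{B}$. If $h_{k}$ meets $\mathbf{B}$, then $h_{k}\notin\mathcal{A}$ (it is not contained in $\mathbf{A}$, which is disjoint from $\mathbf{B}$), so $h_{1},\dots,h_{k}$ is an $\omega^{\mathcal{A}}$-open sequence realizing $z\leftrightarrow\mathcal{B}$ in $\omega^{\mathcal{A}}$, i.e.\ $z\in P_{x,1}$ --- a contradiction. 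Therefore $h_{k}$ meets $\mathbf{A}$ and not $\mathbf{B}$, whence $h_{k}\notin\mathcal{B}$, and then $h_{1},\dots,h_{k}$ is $\omega^{\mathcal{B}}$-open and realizes $z\leftrightarrow\mathcal{A}$ in $\omega^{\mathcal{B}}$, i.e.\ $z\in P_{y,1}$. Thus $Y\setminus P_{x,1}\subset P_{y,1}$, so $Y=P_{x,1}\cup P_{y,1}$. Since $r(y)\geq 3$ we have $P_{y,1}\subsetneq Y$, hence $P_{x,1}\supset Y\setminus P_{y,1}\neq\emptyset$ is a genuine nonempty block, and symmetrically for $P_{y,1}$; this establishes compatibility.

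The only place I expect real friction is the bookkeeping attached to the hyper-edge connectivity relation: making the ``first hyper-edge meeting $\mathbf{A}\cup\mathbf{B}$'' truncation airtight, since an open hyper-edge may join far-apart vertices, so one must check that the truncated sequence $h_{1},\dots,h_{k}$ really avoids every closed hyper-edge and still reaches $\mathcal{A}$ (resp.\ $\mathcal{B}$). The geometric separation of the two boxes and the single-cluster facts are immediate consequences of (Pc) and the definition of a multifurcation box.
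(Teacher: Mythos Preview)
Your proof is correct and arrives at exactly the same target as the paper's, namely $P_{x,1}\cup P_{y,1}=Y$, with $P_{x,1}$ (resp.\ $P_{y,1}$) identified as the block of $Y$ connected to $\mathcal{B}$ (resp.\ $\mathcal{A}$) after closing $\mathcal{A}$ (resp.\ $\mathcal{B}$). The route differs only in presentation: the paper closes \emph{both} families $\mathcal{A}$ and $\mathcal{B}$ at once, classifies the resulting finer clusters of $Y$ into three types (touching $\mathcal{A}$ only, $\mathcal{B}$ only, or both), and then reopens one family at a time to see that these pieces merge into a single block $P_{x,1}$ (resp.\ $P_{y,1}$), from which $P_{x,1}\cup P_{y,1}=Y$ is immediate. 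You instead close one family at a time and run a direct first-hit path argument, truncating an $\omega$-path from $z\in Y$ to $\mathcal{B}$ at the first hyper-edge meeting $\mathbf{A}\cup\mathbf{B}$. The paper's decomposition avoids any path bookkeeping and works purely at the level of clusters; your argument avoids introducing the auxiliary ``both closed'' configuration and is arguably more self-contained. Both are short and the difference is stylistic rather than substantive.
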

  \begin{proof}[Proof of Claim~\ref{claim: compatible}]
   Firstly, by declaring the hyper-edges in the union of hyper-edges $\mathcal{H}(B(3Lx,L-K),K)\cup\mathcal{H}(B(3Ly,L-K),K)$ to be closed, the set $Y_x=Y_y$ splits into several open clusters. (Two vertices are said to be in the same cluster if they can still be connected via a sequence of remainder open hyper-edges.) Some clusters $P_{x\& y,1},P_{x\& y,2},\ldots,P_{x\& y,r_{0}}$ connect to both $\mathcal{H}(B(3Lx,L-K),K)$ and $\mathcal{H}(B(3Ly,L-K),K)$. Some clusters $P_{x\backslash y,1},P_{x\backslash y,2},\ldots,P_{x\backslash y,r_1}$ connect to $\mathcal{H}(B(3Lx,L-K),K)$ but not to the collection $\mathcal{H}(B(3Ly,L-K),K)$. The remaining clusters $P_{y\backslash x,1},P_{y\backslash x,2},\ldots,P_{y\backslash x,r_2}$ connect to $\mathcal{H}(B(3Ly,L-K),K)$ but not to $\mathcal{H}(B(3Lx,L-K),K)$. Note that $Y_x=Y_y$ is exactly the disjoint union of $\{P_{x\& y,i}\}_{i=1,2,\ldots,r_0}$, $\{P_{x\backslash y,j}\}_{j=1,2,\ldots,r_1}$ and $\{P_{y\backslash x,k}\}_{k=1,2,\ldots,r_2}$. Now, if we declare $\mathcal{H}(B(3Lx,L-K),K)$ back to be open, then,
   \[P_{x\& y,1},P_{x\& y,2},\ldots,P_{x\& y,r_{0}},P_{x\backslash y,1},P_{x\backslash y,2},\ldots,P_{x\backslash y,r_1}\]
   will merge into a single cluster $P_{x,1}$. Similarly, if we keep $\mathcal{H}(B(3Lx,L-K),K)$ closed but declare $\mathcal{H}(B(3Ly,L-K),K)$ back to be open, then, the clusters
   \[P_{x\& y,1},P_{x\& y,2},\ldots,P_{x\& y,r_{0}},P_{y\backslash x,1},P_{y\backslash x,2},\ldots,P_{y\backslash x,r_2}\]
   will merge into a single cluster $P_{y,1}$. Then, we have that $P_{x,1}\cup P_{y,1}=Y_{x}=Y_{y}$ and hence, the corresponding partitions $\{P_{x,i}\}_{i=1,2,\ldots,r(x)}$ and $\{P_{y,j}\}_{j=1,2,\ldots,r(y)}$ are compatible.
  \end{proof}

  We will need to relate the size of $Y_{x}$ and the number multifurcation boxes with the offspring $Y_{x}$. For that reason, we need the following lemma which is an adaptation of \cite[Lemma~8.5]{GrimmettMR1707339}.
  \begin{lemma}\label{lem: compatible multi-partitions}
   Let $\mathcal{P}$ be a compatible family of distinct multi-partitions of $Y$, then $\sharp Y\geq \sharp \mathcal{P}+2$.
  \end{lemma}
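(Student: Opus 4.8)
The plan is to prove Lemma~\ref{lem: compatible multi-partitions} by induction on $\sharp Y$. Since every multi-partition has at least three nonempty parts, a set $Y$ with $\sharp Y=3$ admits only the multi-partition into three singletons, so $\sharp\mathcal{P}\le 1=\sharp Y-2$; this is the base case. For the inductive step I take $\sharp Y=n\ge 4$, fix an arbitrary element $y\in Y$, and pass to $Y'=Y\setminus\{y\}$ after deleting $y$ from the ground set and discarding at most one multi-partition from the family.

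The first observation is that at most one $\Pi\in\mathcal{P}$ can have $\{y\}$ as a part. Indeed, if $\Pi_1\ne\Pi_2$ both did, compatibility would give parts $A_1\in\Pi_1$, $A_2\in\Pi_2$ with $A_1\cup A_2=Y$; the point $y$ lies in one of them, say $A_1$, and since $A_1$ meets the part $\{y\}$ of $\Pi_1$ we get $A_1=\{y\}$, hence $A_2\supseteq Y\setminus\{y\}$, and in fact $A_2=Y\setminus\{y\}$ because $A_2$ is disjoint from the part $\{y\}$ of $\Pi_2$; but then $\Pi_2=\{\{y\},\,Y\setminus\{y\}\}$ has only two parts, a contradiction. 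Let $\mathcal{P}_0\subseteq\mathcal{P}$ be this subset (of size $0$ or $1$), write $\Pi\cap Y':=\{Q\cap Y':Q\in\Pi\}\setminus\{\emptyset\}$, and set $\mathcal{P}'=\{\Pi\cap Y':\Pi\in\mathcal{P}\setminus\mathcal{P}_0\}$. For $\Pi\in\mathcal{P}\setminus\mathcal{P}_0$ the point $y$ sits in a part of size at least $2$, so removing $y$ leaves every part nonempty and $\Pi\cap Y'$ is again a multi-partition of $Y'$; compatibility is preserved since $P\cup Q=Y$ forces $(P\cap Y')\cup(Q\cap Y')=Y'$.

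The step that requires genuine work is that $\Pi\mapsto\Pi\cap Y'$ is injective on $\mathcal{P}\setminus\mathcal{P}_0$, so that $\sharp\mathcal{P}'=\sharp\mathcal{P}-\sharp\mathcal{P}_0\ge\sharp\mathcal{P}-1$. Suppose $\Pi_1\cap Y'=\Pi_2\cap Y'$ for distinct $\Pi_1,\Pi_2\in\mathcal{P}\setminus\mathcal{P}_0$, and let $P_i\ni y$ be the part of $\Pi_i$ containing $y$. Comparing the two restricted partitions one extracts the structure $\Pi_1=\{S\cup\{y\},\,T\}\cup\mathcal{O}$ and $\Pi_2=\{T\cup\{y\},\,S\}\cup\mathcal{O}$, where $S=P_1\setminus\{y\}$ and $T=P_2\setminus\{y\}$ are nonempty and disjoint, $\mathcal{O}$ is a common collection of parts with $\mathcal{O}\ne\emptyset$ (each $\Pi_i$ has at least three parts), and $Y$ is the disjoint union of $S$, $T$, $\{y\}$ and the members of $\mathcal{O}$. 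A short case analysis then shows that no part of $\Pi_1$ together with a part of $\Pi_2$ can have union $Y$: such a union meets a member of $\mathcal{O}$ only by being equal to it, which forces $\sharp\mathcal{O}=1$ and one of the two chosen parts to be the unique $O\in\mathcal{O}$; covering $y$ then forces the other chosen part to be $T\cup\{y\}$ or $S\cup\{y\}$, leaving $S$ or $T$ uncovered --- impossible since $S,T\ne\emptyset$. Thus $\Pi_1$ and $\Pi_2$ would not be compatible, a contradiction, and injectivity follows.

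Applying the inductive hypothesis to the compatible family $\mathcal{P}'$ of distinct multi-partitions of $Y'$ gives $\sharp\mathcal{P}'\le\sharp Y'-2=n-3$, hence $\sharp\mathcal{P}\le\sharp\mathcal{P}'+1\le n-2=\sharp Y-2$, which closes the induction. The main obstacle is the injectivity step --- showing that restricting two distinct compatible multi-partitions to $Y\setminus\{y\}$ cannot make them coincide --- which rests on the structural description above together with the case check that uses compatibility in an essential way; everything else is bookkeeping. Conceptually this is the leaf count $\sharp Y\ge\sharp\mathcal{P}+2$ for a tree whose internal vertices, all of degree at least $3$, correspond to the multi-partitions and whose leaves form $Y$; the induction is a combinatorial implementation of that picture.
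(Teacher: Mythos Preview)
Your proof is correct and follows essentially the same inductive approach as the paper: remove a point $y$, note that at most one multi-partition has $\{y\}$ as a block, restrict the remaining multi-partitions to $Y\setminus\{y\}$, and apply the inductive hypothesis. Your argument is in fact more complete than the paper's, which asserts $\sharp\mathcal{P}\le\sharp\mathcal{Q}+1$ citing only the ``at most one $\Pi$ with $\{y\}\in\Pi$'' observation and does not explicitly verify that the restriction map $\Pi\mapsto\Pi\cap Y'$ is injective on $\mathcal{P}\setminus\mathcal{P}_0$---a genuine point that you handle with the structural description and case analysis.
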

  \begin{proof}[Proof of Lemma~\ref{lem: compatible multi-partitions}]
   We prove this by induction on $\sharp Y$. If $\sharp Y=3$, then $\sharp \mathcal{P}\leq 1$ and we have $\sharp Y\geq \sharp\mathcal{P}+2$. Assume that the claim holds if $\sharp Y\leq n$ ($n\geq 3$), and let $Y$ satisfy $\sharp Y=n+1$.

   Pick an element $y\in Y$ and define $Z=Y\setminus\{y\}$. For each multi-partition $\Pi$ of $Y$, it may be expressed as $\Pi=\{Q_1\cup\{y\},Q_2,\ldots,Q_{r}\}$. There are two cases: $Q_1=\emptyset$ or $Q_1\neq\emptyset$. When $Q_1$ is not empty, we have a multi-partition $\Pi^{Z}=\{Q_1,Q_2,\ldots,Q_r\}$ of $Z$. Let $\mathcal{Q}=\{\Pi^{Z}:\{y\}\notin\Pi,\Pi\in\mathcal{P}\}$ be the collection of multi-partitions of $Z$ obtained from the multi-partitions of $Y$ by removing $y$. Then, $\mathcal{Q}$ is compatible family of multi-partitions. Besides, $\sharp \mathcal{P}\leq \sharp Q+1$. Indeed, there exists at most one multi-partition $\Pi$ such that $\{y\}\in\Pi$ by definition of compatibility of multi-partitions.

   By the induction hypothesis, $\sharp Z\geq \sharp \mathcal{Q}+2$. Hence, we have that $\sharp Y=\sharp Z+1\geq \sharp \mathcal{Q}+3\geq \sharp\mathcal{P}+2$.
  \end{proof}

  By Claim~\ref{claim: disjoint or identical}, Claim~\ref{claim: compatible}, Lemma~\ref{lem: compatible multi-partitions}, the total size of those different offsprings is not less than the total number $N$ of multifurcation boxes. However, clearly, the total size of those different offsprings is no more than the size of the boundary $\partial B(4nL)$.

  Note that $\mathbb{E}(N)=p(L)n^d$, where $N$ is the number of multifurcation boxes in $\mathcal{B}_n$. Moreover, since $\sharp \mathcal{B}_n=n^d$, we have $\mathbb{E}(N^2)\leq n^{2d}$. So, by Paley-Zygmund inequality, the number of multifurcation boxes $N$ in $\mathcal{B}_n$ is strictly bigger than $\frac{1}{2}p(L)n^{d}$ with strictly positive probability. Hence, with strictly positive probability,
  \[\sharp\partial B(4nL)\geq N>\frac{1}{2}p(L)n^d.\]
  And for large but fixed $L$, there is a contradiction by amenability of $\mathbb{Z}^d$. From this contradiction, we see that the number of infinite clusters could not be infinite almost surely. Finally, we prove the uniqueness of the infinite cluster.

  \begin{remark}
   It might be possible to prove the result by using trifurcation boxes. However, we feel that it is difficult to achieve or at least difficult to write. One main challenge is a way to glue more than $3$ infinite clusters into exactly $3$ clusters by adding several hyper-edges. (It might happen that certain hyper-edge will glue all these clusters into a single cluster.) Therefore, we decide to use multifurcation boxes instead.
  \end{remark}
  
\section{Grimmett-Marstrand type theorem}\label{sect: grimmett marstrand}

 We will closely follow the proof strategy in \cite[Section~5]{ChangMR3692311} for the loop percolation, which is also an adaptation of the original argument of Grimmett and Marstrand in \cite{GrimmettMarstrandMR1068308}. Roughly speaking, the idea is to use dynamic renormalization. At beginning, one needs to define good and bad boxes. The structure of the boxes almost looks like $\mathbb{Z}_{\geq 0}^2$. By considering good boxes as open vertices, those good boxes form open clusters. Then, one needs to show that this box percolation model has infinite clusters, which by definition of good boxes, ensures an infinite cluster in the slab in the original percolation model. To prove the percolation of good boxes, one need to show that the box percolation model stochastically dominates a supercritical site percolation model on $\mathbb{Z}_{\geq 0}^{2}$. For that purpose, two main properties are important. Firstly, the status of the boxes (i.e. good or bad) is somehow ``locally dependent''. Secondly, the size of the boxes are chosen to be large enough such that a box is good with a probability sufficiently close to $1$. A technical point is that the boxes are explored in certain order and the locations of the boxes are random. In contrast, in static renormalization, the positions of the boxes are fixed and periodic. This explains the name \emph{dynamic} renormalization.

 The main reason for adapting the proof strategy of the loop percolation is the following: They are both percolation models driven by Poisson point processes. To be more precise, consider a Poisson point process on the space of hyper-edges with the intensity measure $\mu$, which is viewed as countable collection of pairs $\{(t_i,h_i)\}_{i}$. Each $h_i$ is a hyper-edge and $t_i$ is the time of appearance of $h_i$. Let
 \begin{equation}\label{eq: G alpha}
  G_{\alpha}=\{h_i:t_i\leq \alpha\}
 \end{equation}
 be the set of hyper-edges that we have collected up to time $\alpha$. Here, $G_{\alpha}$ is regarded as a multi-set. If a hyper-edge $h$ appears several times, we also count the multiplicity. For a hyper-edge $h$, we define it to be open at time $\alpha$ if $h$ belongs to $G_{\alpha}$. By definition of Poisson point process, each hyper-edge, independently from the other hyper-edges, is open with probability $p_h(u)=1-(1-u)^{\mu(\{h\})}$ where $u=1-e^{-\alpha}$. In this way, our percolation model is driven by a Poisson point process and there is a natural coupling for all parameters $u$.

 Due to the similarity between these two models, the rest of the proof basically follows that of loop percolation in \cite[Section~5]{ChangMR3692311} and we will omit many details. However, note that our model is more general. Hence, certain care and modification are necessary, which will be explained as follows:

 Our first step is to modify the definition of a seed event given in \cite[Def. 5.1]{ChangMR3692311}. For that purpose, for a positive function $\beta$ on the space of hyper-edges, similar to \eqref{eq: G alpha}, we define $G_{\beta}$ as $\{h_i:t_i\leq\beta(h_i)\}$. Here, $G$ plays the same role as $\mathcal{L}$ in \cite{ChangMR3692311}.

 Comparing with the definition of a seed event for the loop percolation, we can't require that each pair of vertices in a box are connected with a strictly positive probability. However, by \eqref{eq: translation invariance of mu} and \eqref{eq: irreducibility} for $x=0$ and $y=\pm e_j$ for $j=1,2,\ldots,d$, note that there exists $c=c(\mu)<\infty$ such that the subgraph $B(x,m,c)$ of $B(x,m)$ are totally connected by $\mathcal{G}_{\alpha}$ with positive probabilities for $\alpha>0$, where $B(x,m,c)$ is obtained from $B(x,m)$ by removing $2^d$ many boxes of side length $c$ at those corners of $B(x,m)$. To be more precise, we define
  \begin{equation}
   B(x,m,c)=\left\{y\in B(x,m):\begin{array}{l}\exists i=1,\ldots,d\text{ such that the}\\
   i\text{-th coordinate of }y-x\\
                                    \text{ is contained in }[-(m-c),m-c]\end{array}\right\}.
  \end{equation}

  Accordingly, we define our version of seeds.
  \begin{defn}[Seed event]\label{defn: seed event for hyper-edge percolation}
   Let $m\geq 1$, we call a modified box $B(x,m,c)$ a $\beta$-\emph{seed} if $\forall h\subset B(x,m)$ with $\mu(\{h\})>0$, we have that $h\in\mathcal{G}_{\beta}$. We set
  \begin{equation*}
   K(m,n,\beta)=\{\text{union of the }\beta\text{-seeds lying within }T(m,n)\}.
  \end{equation*}
  When $\beta\equiv\alpha$ is a constant function, we write $\alpha$-seed and $K(m,n,\alpha)$.
  \end{defn}

  The following crucial lemma is analogous to \cite[Lemma~5.1]{ChangMR3692311}.
  \begin{lemma}\label{lem: sprinkling for hyper-edge percolation}
  Suppose $d\geq 3$ and $\alpha>\alpha_c=-\ln(1-u_{c})$. For $\varepsilon,\delta>0$ and an intensity function $\gamma:\mathcal{H}\rightarrow[\alpha,A]$, there exist integers $m=m(d,\alpha,A,\varepsilon,\delta)$ and $n=n(d,\alpha,A,\varepsilon,\delta)$ such that $2m<n$ and the following property holds. Let $R$ be such that $B(m)\subset R\subset B(\lfloor\lambda n\rfloor-1)$. Define $\tilde{\gamma}:\mathcal{H}\rightarrow\mathbb{R}_{+}$ as follows: for a hyper-edge $h$,
  \[\tilde{\gamma}(h)=\left\{\begin{array}{ll}
  \delta & \text{ if }h\subset B(\lfloor\lambda n\rfloor-1),h\cap \partial B(n-1)\neq\emptyset,h\cap R\neq \emptyset,\\
  \gamma(h) & \text{ otherwise.}
   \end{array}\right.\]
  Define the set of hyper-edges $(\mathcal{G}_{\tilde{\gamma}})_{B(n-1)}^{B(\lfloor \lambda n\rfloor-1)}$ by
  \begin{equation}
   (\mathcal{G}_{\tilde{\gamma}})_{B(n-1)}^{B(\lfloor \lambda n\rfloor-1)}=\{h\in \mathcal{G}_{\tilde{\gamma}}:h\cap B(n-1)\neq\emptyset, h\subset B(\lfloor \lambda n\rfloor-1)\}.
  \end{equation}
  Then, we have that
  \[\mathbb{P}\left[R\overset{(\mathcal{G}_{\tilde{\gamma}})_{B(n-1)}^{B(\lfloor \lambda n\rfloor-1)}}{\centernot\longleftrightarrow}K(m,n,\tilde{\gamma}),R\cap K(m,n,\tilde{\gamma})=\emptyset\right]<\varepsilon,\]
  where $R\overset{(\mathcal{G}_{\tilde{\gamma}})_{B(n-1)}^{B(\lfloor \lambda n\rfloor-1)}}{\centernot\longleftrightarrow}K(m,n,\tilde{\gamma})$ means that we cannot connect $R$ to the set $K(m,n,\tilde{\gamma})$ by only using the hyper-edges in $(\mathcal{G}_{\tilde{\gamma}})_{B(n-1)}^{B(\lfloor \lambda n\rfloor-1)}$.
 \end{lemma}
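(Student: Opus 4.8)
The plan is to run the Grimmett--Marstrand dynamic renormalisation in the form used for loop percolation in \cite[Lemma~5.1]{ChangMR3692311}, paying attention to the two genuinely new features: a single hyper-edge may join far-apart vertices, and the base intensity is a bounded function $\gamma\in[\alpha,A]$ rather than a constant. I first remove the second feature. The event to be bounded is \emph{decreasing} in the configuration: deleting hyper-edges can only destroy the connection $R\leftrightarrow K(m,n,\cdot)$ and can only shrink the seed set, so both $\{R\centernot\longleftrightarrow K(m,n,\cdot)\}$ and $\{R\cap K(m,n,\cdot)=\emptyset\}$ become more likely. Writing $\tilde\alpha$ for the modification built from the constant function $\alpha$ exactly as $\tilde\gamma$ is built from $\gamma$, we have $\tilde\alpha\leq\tilde\gamma$ pointwise, so in the natural monotone coupling $\mathcal{G}_{\tilde\alpha}\subseteq\mathcal{G}_{\tilde\gamma}$ and $K(m,n,\tilde\alpha)\subseteq K(m,n,\tilde\gamma)$; hence the bad event for $\tilde\gamma$ is contained in that for $\tilde\alpha$, and it suffices to treat $\gamma\equiv\alpha$. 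By the same monotonicity we may replace $\delta$ by $\min(\delta,\alpha)$. From now on the base intensity is the constant $\alpha>\alpha_c$, we assume $\delta\leq\alpha$, and $A$ no longer appears.

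With this reduction the argument has three steps, all carried out inside $B(\lfloor\lambda n\rfloor-1)$ with the localised set $(\mathcal{G}_{\tilde\alpha})_{B(n-1)}^{B(\lfloor\lambda n\rfloor-1)}$. \emph{Step 1: reaching $T(n-1)$.} Working first with the unhandicapped field and the source $B(m)\subseteq R$, property (Pb) gives $\mathbb{P}[B(m)\leftrightarrow\partial B(n-1)]\to 1$ as $m\to\infty$, uniformly in $n>2m$. The events ``the cluster of $B(m)$ misses the face $F_i$ of $\partial B(n-1)$'' ($i=1,\dots,2d$) are decreasing and, by the full symmetry \eqref{eq: mu invariance under aut Zd}, equidistributed, so FKG gives $\mathbb{P}[\text{miss }F_i]^{2d}\leq\mathbb{P}[\text{miss all }F_i]=o_m(1)$; iterating the square-root trick over the $2^{d-1}$ octants of the face yields $\mathbb{P}[B(m)\leftrightarrow T(n-1)]\geq 1-o_m(1)$. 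Moreover, by Theorem~\ref{thm: uniqueness of the infinite cluster} the cluster of $B(m)$ is, with high probability, a well-connected piece of the unique infinite cluster, whose trace on $T(n-1)$ has size $\geq c\,n^{d-1}$ with probability $\to 1$. \emph{Step 2: sprinkling a seed.} For fixed $m$ and $n$ large, $T(m,n)$ contains at least $c(m)\,n^{d-1}$ disjoint modified boxes $B(y,m,c)$, each meeting $B(n-1)$ and within bounded distance of $T(n-1)$; by \eqref{eq: irreducibility} (through property (Pc)), once the finitely many hyper-edges inside $B(y,m)$ are all open the interior $B(y,m,c)$ is connected, an event of probability $\geq q(m)>0$ even at intensity $\delta$. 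Conditioning on Step 1 and exploring outward in $B(\lfloor\lambda n\rfloor-1)\setminus B(n-1)$, FKG together with the many candidate boxes and the $\geq c\,n^{d-1}$ contact points just produced gives that the cluster of $B(m)$ is joined to a $\tilde\gamma$-seed inside $T(m,n)$ with probability $\to 1$ as $n\to\infty$; the hyper-edges realising such a seed lie inside $B(y,m)$, and those attaching the cluster to it reach $\partial B(n-1)$ only from vertices outside $R$, so none is handicapped. \emph{Step 3:} choosing $m$ large, then $n$ large, gives $\mathbb{P}[R\leftrightarrow K(m,n,\tilde\gamma)\ \text{or}\ R\cap K(m,n,\tilde\gamma)\neq\emptyset]>1-\varepsilon$.

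It remains to license the use of $\mathcal{G}_{\tilde\alpha}$ rather than $\mathcal{G}_\alpha$ in Step 1, uniformly over admissible $R$, for which I distinguish three cases. If $R\subseteq B(n-1-\ell_n)$ for a slowly growing $\ell_n$, every handicapped hyper-edge meets both $R$ and $\partial B(n-1)$ and hence has diameter $\geq\ell_n$, so \eqref{eq: one hyper-edge annulus crossing general mu}, applied to annuli $B(z,t)\subset B(z,\lceil\lambda t\rceil)$ with $t\asymp\ell_n$ and summed over $z\in B(\lfloor\lambda n\rfloor)$, bounds the total $\mu$-mass of handicapped hyper-edges by $o(n^d)$; the cluster of $B(m)$ built above avoids all of them, since a path from $B(m)$ to $T(n-1)$ can be routed through intermediate vertices of $B(n-1)\setminus R$, each edge of which is either too short to reach $\partial B(n-1)$ or emanates from outside $R$, and deleting a $\mu$-negligible family alters the relevant probabilities by $o_n(1)$. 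If $R$ comes within $\ell_n$ of $\partial B(n-1)$ but $T(m,n)\not\subseteq R$, then $R$ lies within $O(\ell_n)$ of $T(m,n)$ and Steps 2--3 apply with a trivial first leg. Finally, if $T(m,n)\subseteq R$ (which forces $n$ large relative to $m$), then $\mathbb{P}[R\cap K(m,n,\tilde\gamma)=\emptyset]\leq(1-q(m))^{c(m)n^{d-1}}<\varepsilon$ straight from the seed count.

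The principal obstacle is the interaction between Step 1 and the handicap. In bond percolation the cluster of a box is explored shell by shell and the square-root trick is applied to a fully symmetric, translation-invariant field; here a single hyper-edge can connect $R$ directly to a far point of $\partial B(n-1)$ (or jump over it), and the handicap is tied to the arbitrary, asymmetric set $R$ and to the seed region $T(m,n)$, so the symmetry underlying the square-root trick is available only for the plain field $\mathcal{G}_\alpha$. Showing rigorously that one may run the symmetric argument for $\mathcal{G}_\alpha$ and then pass to $\mathcal{G}_{\tilde\alpha}$ at an $o_n(1)$ cost, \emph{uniformly in $R$}, is exactly where the annulus-crossing bound \eqref{eq: one hyper-edge annulus crossing general mu} and the full automorphism symmetry \eqref{eq: mu invariance under aut Zd} are indispensable; the remaining delicate point is the amplification from the positive constant $q(m)$ of Step 2 to the $1-\varepsilon$ of Step 3, which rests on the abundance of candidate seed boxes in $T(m,n)$ obtained by taking $n$ large, precisely as in \cite{GrimmettMarstrandMR1068308,ChangMR3692311}.
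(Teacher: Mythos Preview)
Your monotonicity reduction from $\gamma\in[\alpha,A]$ to the constant $\alpha$ is correct and is also implicit in the paper's route, so nothing is lost there. Steps 1--2 are in the right spirit and roughly parallel the paper's Lemmas~\ref{lem: truncated crossing general mu}--\ref{lem: a connection to a seed}, although your claim that the trace of the cluster on $T(n-1)$ has cardinality $\geq c\,n^{d-1}$ is not justified (uniqueness of the infinite cluster does not by itself give a surface-order lower bound on boundary hits); the paper instead shows, via Lemma~\ref{lem: many truncated hyper-edges}, that the \emph{$\mu$-mass} $\mu(m,n,\alpha)$ of boundary-crossing hyper-edges touching the cluster tends to infinity, and then passes to many vertices in Lemma~\ref{lem: many truncated connections to the outer box}.

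The genuine gap is your treatment of the handicap via a trichotomy on $R$. Case~2 is simply false: the hypotheses $B(m)\subset R\subset B(\lfloor\lambda n\rfloor-1)$ and ``$R$ comes within $\ell_n$ of $\partial B(n-1)$'' do not force $R$ anywhere near $T(m,n)$; $R$ could protrude toward the opposite face $-F(n-1)$, or toward any other face or octant, and then your ``trivial first leg'' does not exist. In Case~1 the bound ``total $\mu$-mass of handicapped hyper-edges is $o(n^d)$'' is far too weak to conclude anything (you would need $o(1)$, not $o(n^d)$, to say the handicap is negligible), and the assertion that a path from $B(m)$ to $T(n-1)$ ``can be routed through $B(n-1)\setminus R$'' is unsupported: $R$ is an arbitrary set containing $B(m)$, and you have no control over its shape. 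Since the trichotomy is neither exhaustive nor individually sound, the argument does not establish the uniform-in-$R$ conclusion.

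The paper avoids this case analysis altogether. It first proves Lemma~\ref{lem: a connection to a seed} (the unhandicapped statement $\mathbb{P}[B(m)\leftrightarrow K(m,n,\alpha)]>1-\eta$, with no $R$ in sight), and then deduces Lemma~\ref{lem: sprinkling for hyper-edge percolation} by the standard sprinkling step from \cite[Lemma~5.1]{ChangMR3692311}: explore the cluster of $R$ inside $B(n-1)$ at intensity $\alpha$ (these hyper-edges are not handicapped), observe via Lemma~\ref{lem: many truncated hyper-edges} that the $\mu$-mass of hyper-edges crossing $\partial B(n)$ and touching the explored cluster is large, and then note that \emph{only one} such hyper-edge needs to appear at intensity $\delta$ for the connection to $K(m,n,\tilde\gamma)$ to succeed. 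The point is that the handicapped hyper-edges are precisely the ones used in this single boundary-crossing step, and their abundance compensates for the small intensity $\delta$; no geometric case analysis on $R$ is needed.
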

 Note that $K(m,n,\tilde{\gamma})=K(m,n,\gamma)$.

 Once this lemma is verified, the rest of the proof is the same as that of loop percolation. Why do we need this lemma? In the dynamic renormalization, we need to connect a seed to another seed via certain exploration process. To establish a stochastic domination with site percolation, we need to show that a new seed could be connected with a conditional probability sufficiently close to $1$ given the history of the exploration. There could be failure in discovering new seeds. This negative information prevents us from using FKG inequalities. Instead, the idea is to use sprinkling, i.e. we locally add more hyper-edges by slightly changing the time parameter $\gamma$. The connection to a new seed might be unsuccessful. However, after a local and very small increase in the parameter, the goal of connection to a new seed could be achieved with sufficiently high probabilities. This is precisely the value of Lemma~\ref{lem: sprinkling for hyper-edge percolation}. Here, $R$ is the intersection of $B(\lfloor \lambda n\rfloor-1)$ with the union of hyper-edges that we have explored. There are still many unexplored hyper-edges, their status are not determined and they are independent of the exploration history. For example, the hyper-edges related with the definition of $K(m,n,\gamma)$ are such unexplored hyper-edges. The same is true for the hyper-edges disjoint from $R$ and a hyper-edge $h$ intersecting $R$ but appears in the time $(\gamma(h),\gamma(h)+\delta)$. These three kind of hyper-edges is sufficient to find new seed with sufficiently high probabilities.

 To obtain Lemma~\ref{lem: sprinkling for hyper-edge percolation}, we prove several preparation lemmas in sequence.

 Firstly, in the loop percolation on $\mathbb{Z}^d$, we need to show that the intensity measure of large loops are small, see \cite[Lemma~5.2]{ChangMR3692311}. For the hyper-edge percolation, by using the condition \eqref{eq: one hyper-edge annulus crossing general mu}, we control the intensity measure of large hyper-edges via the following lemma.
 \begin{lemma}\label{lem: control of large hyper-edges}
 There exists a non-decreasing function $f:\mathbb{Z}_{+}\to\mathbb{Z}_{+}$ such that $f(n)<n$, that $\lim_{n\to\infty}f(n)=+\infty$ and that
 \begin{equation*}
  \lim_{n\to\infty}\mu(\{h\in\mathcal{H}:h\cap B(f(n))\neq\emptyset, h\cap\partial B(n)\neq\emptyset\})=0.
 \end{equation*}
 \end{lemma}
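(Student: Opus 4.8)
The plan is to extract the function $f$ directly from the hypothesis \eqref{eq: one hyper-edge annulus crossing general mu} by a diagonalization argument. Recall that \eqref{eq: one hyper-edge annulus crossing general mu} provides a $\lambda>1$ with
\[
S:=\sup_{n}\mu\bigl(\{h:h\cap B(n)\neq\emptyset,\ h\cap\partial B(\lfloor\lambda n\rfloor)\neq\emptyset\}\bigr)<\infty.
\]
The first step is to upgrade this uniform bound on single annuli of aspect ratio $\lambda$ into a bound that \emph{decays} when the annulus becomes much fatter. Fix an integer $K\geq 1$ and consider an annulus between $B(\lfloor\lambda^{-K}n\rfloor)$ and $\partial B(n)$. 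Any hyper-edge $h$ meeting both $B(\lfloor\lambda^{-K}n\rfloor)$ and $\partial B(n)$ must, since it is connected-in-diameter (it is a single finite set with two points at distance comparable to $n$ and to $\lambda^{-K}n$ from the origin in $\ell^\infty$), cross at least one of the $K$ nested annuli $\{h:h\cap B(\lfloor\lambda^{j-1}\cdot\lambda^{-K}n\rfloor)\neq\emptyset,\ h\cap\partial B(\lfloor\lambda^{j}\cdot\lambda^{-K}n\rfloor)\neq\emptyset\}$ for $j=1,\dots,K$ — actually it must cross \emph{every} one of them, since it meets $B(\lfloor\lambda^{-K}n\rfloor)$ (contained in the inner box of each) and $\partial B(n)$ (outside the outer box of each). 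Hence such an $h$ lies in the intersection of $K$ events each of $\mu$-measure at most $S$; but that only gives the bound $S$, not decay. So this naive approach does not immediately work, and one needs to be slightly more careful: the point is that these $K$ annulus-crossing events are not independent, but one can instead observe that the measure of hyper-edges meeting $B(\lfloor\lambda^{-K}n\rfloor)$ and $\partial B(n)$ is bounded by the measure of hyper-edges meeting $B(\lfloor\lambda^{-1}n\rfloor)$ and $\partial B(n)$, which by translating/scaling the hypothesis (applied at scale $\lfloor\lambda^{-1}n\rfloor$) is at most $S$, uniformly — so the measure in question is $\leq S$ for all $K$, hence does not obviously go to $0$. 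The correct observation is that $\mu$ is $\sigma$-finite, so the measure of the countable union over all $K$ of the sets ``$h$ meets $B(\lfloor\lambda^{-K}n\rfloor)$ but $h\not\subset B(n)$'' is, for fixed $n$, finite, and by monotone convergence from below (as $K\to\infty$ the sets \emph{increase} to ``$h\ni 0$-ish, $h$ unbounded-ish''), and from \eqref{eq: local finiteness of general mu} plus dominated convergence one gets the tail decay.

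Concretely, I would argue as follows. For each integer $j\ge 1$ set
\[
a_j \;=\; \mu\bigl(\{h:h\cap B(j)\neq\emptyset,\ h\cap\partial B(j')\neq\emptyset \text{ for some } j'>j,\ \text{i.e. } h\not\subset B(j)\}\bigr)\cap\{h\cap B(\lfloor\lambda^{-N}\,? \rfloor)\}\dots
\]
— rather than chase indices, the clean statement is: by local finiteness \eqref{eq: local finiteness of general mu}, $\mu(\{h: 0\in h\})<\infty$, and more generally for each fixed $r$, $\mu(\{h:h\cap B(r)\neq\emptyset\})<\infty$ by translation invariance and a union over the finitely many vertices of $B(r)$. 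Then for fixed $r$, the sets $E_{r,n}:=\{h:h\cap B(r)\neq\emptyset,\ h\cap\partial B(n)\neq\emptyset\}$ are decreasing in $n$ with $\bigcap_n E_{r,n}=\{h:h\cap B(r)\neq\emptyset,\ h\text{ infinite}\}=\emptyset$ since all hyper-edges are finite. As $\mu(E_{r,r+1})\le\mu(\{h:h\cap B(r)\neq\emptyset\})<\infty$, dominated convergence gives $\lim_{n\to\infty}\mu(E_{r,n})=0$ for each fixed $r$. Now define
\[
f(n)\;=\;\max\Bigl\{\,r\le n-1:\ \mu(E_{r',n})\le \tfrac1r \ \text{ for all } r'\le r\,\Bigr\}\vee 1,
\]
or more simply: for each $r$ pick $N(r)$ so that $\mu(E_{r,n})<1/r$ for all $n\ge N(r)$ (possible by the previous sentence, and we may take $N$ strictly increasing), and set $f(n)=\max\{r: N(r)\le n\}$ (and $f(n)=1$ if no such $r$, and then truncate to ensure $f(n)<n$ and $f$ nondecreasing). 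Then $f$ is nondecreasing, $f(n)\to\infty$, $f(n)<n$, and $\mu(\{h:h\cap B(f(n))\neq\emptyset,\ h\cap\partial B(n)\neq\emptyset\})=\mu(E_{f(n),n})<1/f(n)\to 0$, which is exactly the claim. Note this argument in fact only uses local finiteness \eqref{eq: local finiteness of general mu} (equivalently finiteness of $\mu$ on $\{h:h\cap B(r)\neq\emptyset\}$, which follows from \eqref{eq: finiteness on one hyper-edge} and translation invariance), not even the full strength of \eqref{eq: one hyper-edge annulus crossing general mu} — the latter is presumably what makes the convergence above \emph{quantitative} and uniform, which may be needed downstream, but for the bare statement of the lemma qualitative decay suffices.

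The main obstacle is bookkeeping rather than conceptual: one must arrange the three competing requirements on $f$ — nondecreasing, $f(n)<n$, and $f(n)\to\infty$ — simultaneously with the measure bound, which forces $f$ to grow slowly enough that $n\ge N(f(n))$ always holds. This is handled by the standard trick of defining $f$ as (essentially) the inverse of the strictly increasing sequence $N(r)$, capped below. A secondary point to verify carefully is the reduction $\bigcap_n E_{r,n}=\emptyset$, which is where finiteness of hyper-edges is used: every $h$ has bounded $\ell^\infty$-diameter, so $h\subset B(n)$ for $n$ large, hence $h\notin E_{r,n}$ eventually. If instead one wants the explicit quantitative rate coming from \eqref{eq: one hyper-edge annulus crossing general mu} (e.g. to feed later lemmas), one iterates the $\lambda$-annulus bound $S$ across $K\sim \log_\lambda(n/f(n))$ scales; I would keep that iteration in reserve and present only the $\sigma$-finiteness argument unless a later lemma demands the rate.
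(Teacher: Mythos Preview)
Your proposal is correct and takes essentially the same approach as the paper: fix the inner radius $r$, use finiteness of $\mu(\{h:h\cap B(r)\neq\emptyset\})$ together with continuity from above (the sets $E_{r,n}$ decrease to $\emptyset$ since hyper-edges are finite) to get $\mu(E_{r,n})\to 0$, then choose a strictly increasing $N(r)$ (the paper calls it $g$) and set $f$ to be its inverse. Your side remark that only local finiteness \eqref{eq: local finiteness of general mu} is needed, rather than the full annulus bound \eqref{eq: one hyper-edge annulus crossing general mu} that the paper cites, is a valid sharpening --- the paper's invocation of \eqref{eq: one hyper-edge annulus crossing general mu} is really only used to guarantee the finite starting measure for continuity from above, which \eqref{eq: local finiteness of general mu} already provides.
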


 \begin{proof}[Proof of Lemma~\ref{lem: control of large hyper-edges}]
  By \eqref{eq: one hyper-edge annulus crossing general mu} and the continuity of the measure $\mu$ from above, for any positive integer $n$, there exists another positive integer $g(n)>n$ such that
  \begin{equation*}
   \mu(\{h\in\mathcal{H}:h\cap B(n)\neq\emptyset, h\cap\partial B(g(n))\neq\emptyset\})\leq 1/n.
  \end{equation*}
  Consider the sequence of larger integers
  \[g(1),\max(g(1),g(2))+1,\max(\max(g(1),g(2))+1,g(3))+1,\ldots\]
  if necessary, we may assume that $n\mapsto g(n)$ is strictly increasing. Take $f$ to be the inverse of $g$. Then, $f$ has the required properties.
 \end{proof}
 In the following, we assume that $f$ satisfies the properties in Lemma~\ref{lem: control of large hyper-edges} and the constant $\lambda>1$ satisfies the condition \eqref{eq: one hyper-edge annulus crossing general mu}. Here, $f(n)$ (resp. $\lambda n$) plays the same role for our model as that of $\sqrt{n}$ (resp. $2n$) in the proof of Grimmett-Marstrand's theorem for the loop percolation.

 Secondly, by using the argument similar to proof of \cite[Lemma~5.3]{ChangMR3692311}, we obtain the following replacement \cite[Lemma~5.3]{ChangMR3692311} of from Lemma~\ref{lem: control of large hyper-edges}:
  \begin{lemma}\label{lem: truncated crossing general mu}
   For fixed $m\geq 1$ and $u=1-e^{-\alpha}$,
  \begin{equation*}
   \lim\limits_{n\rightarrow\infty}\mathbb{P}_{u}[B(m)\longleftrightarrow\partial B(f(n)),B(m)\overset{B(n-1)}{\centernot\longleftrightarrow}\partial B(f(n))]=0,
  \end{equation*}
  where $B(m)\overset{B(n-1)}{\centernot\longleftrightarrow}\partial B(f(n))$ means that there is no connection between $B(m)$ and $\partial B(f(n))$ if we only use the open hyper-edges in $B(n-1)$.
  \end{lemma}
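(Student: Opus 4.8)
The plan is to imitate the proof of \cite[Lemma~5.3]{ChangMR3692311} for loop percolation: one reduces the event to the appearance of an open hyper-edge that is ``large'' in a suitable sense, and then kills that event with Lemma~\ref{lem: control of large hyper-edges} together with the elementary inequality $\mathbb P_u(h\ \text{open})=1-(1-u)^{\mu(\{h\})}\le\alpha\,\mu(\{h\})$ (with $\alpha=-\ln(1-u)$), which is immediate from \eqref{eq: p defined by mu}. To set this up, fix $n$ so large that $m<f(n)$ and $f(n)+1<n$, and let $\mathcal C$ denote the cluster of $B(m)$ in the configuration obtained by keeping only the open hyper-edges $h\subseteq B(n-1)$. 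Then the event $\{B(m)\leftrightarrow\partial B(f(n))\}\cap\{B(m)\overset{B(n-1)}{\centernot\longleftrightarrow}\partial B(f(n))\}$ is precisely the event that $\mathcal C\cap\partial B(f(n))=\emptyset$ while the cluster of $B(m)$ in the full configuration meets $\partial B(f(n))$. On this event the full cluster strictly contains $\mathcal C$, so at least one open hyper-edge meets $\mathcal C$ but is not contained in $B(n-1)$; fixing a minimal open chain $h_1,\dots,h_k$ with $h_1\cap B(m)\neq\emptyset$, with $h_k\cap\partial B(f(n))\neq\emptyset$, and with $h_k$ the first link of the chain meeting $\partial B(f(n))$, the hypothesis $B(m)\overset{B(n-1)}{\centernot\longleftrightarrow}\partial B(f(n))$ forces some $h_j\not\subseteq B(n-1)$.

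Next I would locate the ``large'' hyper-edge. In the loop setting the connectedness of loops forces $\mathcal C\subseteq B(f(n)-1)$, so the offending loop automatically meets both $B(f(n))$ and $\partial B(n-1)$; for general hyper-edges this is false, and I would argue by cases. If some link meeting $\partial B(f(n))$ also meets $\mathbb Z^d\setminus B(n-1)$ --- in particular if $h_k\not\subseteq B(n-1)$ --- then that link is an open hyper-edge $h$ with $h\cap B(f(n))\neq\emptyset$ and $h\not\subseteq B(n-1)$. Otherwise, let $j_0$ be the least index with $h_{j_0}\not\subseteq B(n-1)$; the prefix $h_1,\dots,h_{j_0-1}$ lies inside $\mathcal C$, which avoids $\partial B(f(n))$, so $h_{j_0}$ shares a vertex of $B(n-1)$ with $\mathcal C$, and that vertex lies either in $B(f(n)-1)$ --- again producing an open $h$ with $h\cap B(f(n))\neq\emptyset$, $h\not\subseteq B(n-1)$ --- or in the annulus $B(n-1)\setminus B(f(n))$. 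In the latter case, since two consecutive hyper-edges of a chain share a vertex, and a vertex cannot have $\ell^\infty$-norm both $<f(n)$ and $>f(n)$, the portion of $\mathcal C$ joining $B(m)$ to that vertex must contain an open hyper-edge $h'\subseteq B(n-1)$ with $h'\cap B(f(n)-1)\neq\emptyset$ and $h'\cap(\mathbb Z^d\setminus B(f(n)))\neq\emptyset$; this straddling $h'$ comes paired with the exit hyper-edge $h_{j_0}$.

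For the ``clean'' cases the conclusion is then routine: since the set of hyper-edges is countable,
\[\mathbb P_u\big[\exists\ \text{open}\ h:\ h\cap B(f(n))\neq\emptyset,\ h\not\subseteq B(n-1)\big]\le\alpha\,\mu\big(\{h:h\cap B(f(n))\neq\emptyset,\ h\not\subseteq B(n-1)\}\big),\]
and by translation invariance the right-hand side is at most $\alpha\,(2f(n)+1)^d\,\mu\big(\{h:0\in h,\ h\not\subseteq B(n-1-f(n))\}\big)$; the last factor tends to $0$ as $n\to\infty$ by continuity of $\mu$ from above, using \eqref{eq: local finiteness of general mu} and the finiteness of every hyper-edge, so --- since the function $f$ of Lemma~\ref{lem: control of large hyper-edges} may be chosen as slowly growing as we wish --- the whole bound tends to $0$.

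The step I expect to be the main obstacle is the remaining, ``leaked'' case, where $\mathcal C$ is genuinely not confined to $B(f(n)-1)$ because a single hyper-edge may join $B(f(n)-1)$ to $\mathbb Z^d\setminus B(f(n))$ while missing $\partial B(f(n))$. There the exit hyper-edge $h_{j_0}$ witnessing the event need not meet $B(f(n))$, so it must be controlled jointly with the straddling hyper-edge $h'$; I would do this by conditioning on $\mathcal C$, which leaves all hyper-edges meeting $\mathbb Z^d\setminus B(n-1)$ independent of $\mathcal C$, and then running a union bound in which the $\mu$-mass of the relevant pairs is again estimated through $p_h(u)\le\alpha\,\mu(\{h\})$, the finiteness of hyper-edges, condition \eqref{eq: one hyper-edge annulus crossing general mu}, and the freedom in the choice of $f$. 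Making this last estimate quantitative is the one place where our model requires genuine care beyond the loop-percolation argument of \cite[Lemma~5.3]{ChangMR3692311}.
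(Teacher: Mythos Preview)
Your overall plan --- imitate \cite[Lemma~5.3]{ChangMR3692311} and appeal to Lemma~\ref{lem: control of large hyper-edges} via the elementary bound $p_h(u)\le\alpha\,\mu(\{h\})$ --- is precisely what the paper intends; the paper gives no argument beyond that one-sentence reduction, so on the ``clean'' case your write-up is already more detailed than the paper's own proof.

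You also correctly isolate the ``leaked'' case as the genuine new difficulty over loop percolation: for loops, $\mathcal C\cap\partial B(f(n))=\emptyset$ forces $\mathcal C\subseteq B(f(n)-1)$ because each loop is a connected path, whereas a general hyper-edge can jump over $\partial B(f(n))$. But your proposed handling of that case is not yet a proof. Conditioning on $\mathcal C$ does leave the exit edge $h_{j_0}$ independent, but the bound you would then need is $\alpha\,\mu(\{h:h\cap\mathcal C\neq\emptyset,\ h\not\subseteq B(n-1)\})$, and once $\mathcal C$ has leaked into the annulus $B(n-1)\setminus B(f(n))$ this mass is governed only by the geometry near $\partial B(n-1)$ and need not tend to zero. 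Pairing with the straddling edge $h'$ does not rescue the estimate: $h'$ carries no useful diameter constraint (it need only reach from $B(f(n)-1)$ to $B(f(n)+1)$), so the ``union bound over pairs'' has no small factor to exploit, and neither \eqref{eq: one hyper-edge annulus crossing general mu} nor the freedom in choosing $f$ bounds it directly. To close the argument you need a further ingredient --- for instance a separate proof that $\mathbb P_u[\mathcal C\not\subseteq B(f(n)),\ \mathcal C\cap\partial B(f(n))=\emptyset]\to0$, or a second intermediate scale confining the leaked part of $\mathcal C$. The paper's one-line proof does not address this point either, so the gap you have flagged is shared rather than introduced by your attempt.
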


 Thirdly, we obtain the following analogy of \cite[Lemma~5.4]{ChangMR3692311} by following the same proof.
 \begin{lemma}\label{lem: many truncated hyper-edges}
 Fix $m\geq 1$ and a sufficiently large $n$. Define $\mathcal{C}(m,n,\alpha)$ to be the cluster of vertices which can be connected to $B(m)$ by $\{h\in\mathcal{G}_{\alpha}:h\subset B(n-1)\}$. Define
 \[\mu(m,n,\alpha)=\mu(\{h:h\subset B(\lfloor \lambda n\rfloor-1),h\cap\partial B(n)\neq\emptyset,h\cap\mathcal{C}(m,n,\alpha)\neq\emptyset\}).\]
 Let $(\mathcal{G}_{\alpha})^{B(n-1)}=\{h\in\mathcal{G}_{\alpha}:h\subset B(n-1)\}$. Then, for fixed $m,k\geq 1$ and $\alpha>0$,
 \begin{equation*}
  \lim\limits_{n\rightarrow\infty}\mathbb{P}[B(m)\overset{(\mathcal{G}_{\alpha})^{B(n-1)}}\longleftrightarrow \partial B(f(n)),\mu(m,n,\alpha)\leq k]=0,
 \end{equation*}
 where $B(m)\overset{(\mathcal{G}_{\alpha})^{B(n-1)}}\longleftrightarrow \partial B(f(n))$ means that $B(m)$ and $B(f(n))$ are connected via hyper-edges in $(\mathcal{G}_{\alpha})^{B(n-1)}$.
 \end{lemma}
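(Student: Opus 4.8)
The plan is to adapt the one-step, sprinkling-free argument behind \cite[Lemma~5.4]{ChangMR3692311}, using the Poisson description \eqref{eq: G alpha}. Put $\mathcal{F}_{n-1}=\sigma(\omega(h):h\subset B(n-1))$; then both the event $A_n=\{B(m)\overset{(\mathcal{G}_\alpha)^{B(n-1)}}{\longleftrightarrow}\partial B(f(n))\}$ and the random variable $\mu(m,n,\alpha)$ are $\mathcal{F}_{n-1}$-measurable, since $f(n)<n$ and since $\mu(m,n,\alpha)$ is a deterministic functional of the cluster $\mathcal{C}:=\mathcal{C}(m,n,\alpha)\subset B(n-1)$. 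Write $B_n=\{\mu(m,n,\alpha)\le k\}$ and $a_n=\mathbb{P}[A_n\cap B_n]$, so that the claim is $a_n\to0$. The mechanism is that on $B_n$ the cluster $\mathcal{C}$ can leave $B(n-1)$ only through an amount of $\mu$-mass controlled by $k$, so that, conditionally on $\mathcal{F}_{n-1}$, the event $\{B(m)\leftrightarrow\infty\}$ has probability bounded away from $1$; since on the other hand it is rare to have $B(m)\leftrightarrow\partial B(f(n))$ without $B(m)\leftrightarrow\infty$, this closes into the desired bound.

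Concretely, let $\mathcal{E}_n=\{h\in\mathcal{H}:h\cap\mathcal{C}\ne\emptyset,\ h\not\subset B(n-1)\}$ be the set of hyper-edges through which $\mathcal{C}$ could be enlarged past $B(n-1)$. Any open path witnessing $B(m)\leftrightarrow\infty$ must, at its first hyper-edge not contained in $B(n-1)$, use a hyper-edge of $\mathcal{E}_n$, so $\{B(m)\leftrightarrow\infty\}\subset\{\exists\,h\in\mathcal{E}_n:\omega(h)=1\}$; moreover $\mathcal{E}_n$ is $\mathcal{F}_{n-1}$-measurable (being read off from $\mathcal{C}$) whereas every $h\in\mathcal{E}_n$ satisfies $h\not\subset B(n-1)$, so $(\omega(h))_{h\in\mathcal{E}_n}$ is independent of $\mathcal{F}_{n-1}$; by the Poisson structure,
\[
\mathbb{P}\bigl[B(m)\leftrightarrow\infty\ \big|\ \mathcal{F}_{n-1}\bigr]\ \le\ 1-e^{-\alpha\,\mu(\mathcal{E}_n)}.
\]
The crucial estimate is that on $B_n$ one has $\mu(\mathcal{E}_n)\le Ck+\varepsilon_n$ for a constant $C=C(\mu)<\infty$ and some $\varepsilon_n\to0$. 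Split $\mathcal{E}_n$ according to whether or not $h\subset B(\lfloor\lambda n\rfloor-1)$. The members of $\mathcal{E}_n$ not contained in $B(\lfloor\lambda n\rfloor-1)$ meet $B(n-1)$ and reach outside $B(\lfloor\lambda n\rfloor-1)$, so by \eqref{eq: one hyper-edge annulus crossing general mu}, Lemma~\ref{lem: control of large hyper-edges} and local finiteness \eqref{eq: local finiteness of general mu} their total $\mu$-mass is at most some $\varepsilon_n\to0$. Among those contained in $B(\lfloor\lambda n\rfloor-1)$: the ones meeting $\partial B(n)$ form a subfamily of the family defining $\mu(m,n,\alpha)$ (note $\partial B(n)\cap B(n-1)=\emptyset$, so $h\cap\partial B(n)\ne\emptyset$ automatically gives $h\not\subset B(n-1)$), hence have mass $\le\mu(m,n,\alpha)\le k$ on $B_n$; the ones straddling $\partial B(n)$ without touching it are bounded by $(C-1)k$ using, for the long straddles, again \eqref{eq: one hyper-edge annulus crossing general mu}/Lemma~\ref{lem: control of large hyper-edges}, and for the short ones a vertex-by-vertex comparison near $\partial B(n-1)$: by translation-invariance of $\mu$ and the irreducibility \eqref{eq: irreducibility} (valid under \eqref{eq: mu invariance under aut Zd}), each vertex of $\mathcal{C}$ carrying short straddling hyper-edges also carries a comparable $\mu$-mass of hyper-edges that do meet $\partial B(n)$ and lie in $B(\lfloor\lambda n\rfloor-1)$, whose total is $\le\mu(m,n,\alpha)\le k$.

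Granting this, the argument closes by a short self-referential bookkeeping: decompose $a_n=\mathbb{P}[A_n\cap B_n\cap\{B(m)\leftrightarrow\infty\}]+\mathbb{P}[A_n\cap B_n\cap\{B(m)\not\leftrightarrow\infty\}]$. Since $A_n\cap B_n\in\mathcal{F}_{n-1}$, the first term equals $\mathbb{E}\bigl[\mathbf{1}_{A_n\cap B_n}\,\mathbb{P}(B(m)\leftrightarrow\infty\mid\mathcal{F}_{n-1})\bigr]$, which by the previous display and estimate is $\le\bigl(1-e^{-\alpha(Ck+\varepsilon_n)}\bigr)a_n$. The second term is at most $\mathbb{P}[B(m)\leftrightarrow\partial B(f(n)),\,B(m)\not\leftrightarrow\infty]$, which tends to $0$ because $f(n)\to\infty$ and $\mathbb{P}$ is continuous from above along $\{B(m)\leftrightarrow\partial B(r)\}\downarrow\{B(m)\leftrightarrow\infty\}$. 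Thus $e^{-\alpha(Ck+\varepsilon_n)}a_n\le o(1)$, and since $\varepsilon_n\to0$ this yields $a_n\to0$. (Lemma~\ref{lem: truncated crossing general mu} is not needed here; downstream it gives in addition $\mathbb{P}[A_n]\to\mathbb{P}[B(m)\leftrightarrow\infty]$, which is the quantity one wants bounded below in the renormalization.)

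The step I expect to be the real obstacle is the bound $\mu(\mathcal{E}_n)\le Ck+o(1)$ on $B_n$, specifically the treatment of hyper-edges that cross from $B(n-1)$ to its complement \emph{without} meeting $\partial B(n)$ — impossible for a connected loop in \cite{ChangMR3692311} but possible for a general hyper-edge. This is exactly where the annulus-crossing hypothesis \eqref{eq: one hyper-edge annulus crossing general mu} (through Lemma~\ref{lem: control of large hyper-edges}) and the symmetry \eqref{eq: mu invariance under aut Zd} enter, together with an elementary covering/translation comparison; granting that, the rest is the routine transcription of the proof of \cite[Lemma~5.4]{ChangMR3692311}.
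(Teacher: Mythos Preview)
Your overall scheme---condition on $\mathcal{F}_{n-1}$, bound $\mathbb{P}[B(m)\leftrightarrow\infty\mid\mathcal{F}_{n-1}]$ by $1-e^{-\alpha\mu(\mathcal{E}_n)}$, and absorb the remainder into $\mathbb{P}[B(m)\leftrightarrow\partial B(f(n)),\,B(m)\not\leftrightarrow\infty]\to 0$---is exactly the argument behind \cite[Lemma~5.4]{ChangMR3692311}, which is all the paper invokes. You have also correctly located the one genuinely new point: for loops, any $h$ meeting $\mathcal{C}\subset B(n-1)$ with $h\not\subset B(n-1)$ automatically satisfies $h\cap\partial B(n)\neq\emptyset$ by nearest-neighbour connectivity, so $\mu(\mathcal{E}_n)$ is essentially $\mu(m,n,\alpha)$ plus a long-range tail; for general hyper-edges this fails and ``jumpers'' appear.

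The gap is in your control of $\mu(\mathcal{E}_n)$ on $B_n$. Two steps are not justified. First, the claim that the piece of $\mathcal{E}_n$ not contained in $B(\lfloor\lambda n\rfloor-1)$ has mass $\varepsilon_n\to 0$: condition~\eqref{eq: one hyper-edge annulus crossing general mu} and Lemma~\ref{lem: control of large hyper-edges} bound only hyper-edges that \emph{touch} a fixed sphere $\partial B(r)$, not those that merely have a vertex at radius $\ge r$; a hyper-edge can leap from $B(n-1)$ past $\partial B(\lfloor\lambda n\rfloor)$ without landing on it, and neither hypothesis controls the total mass of such leaps. Second, and more seriously, the ``vertex-by-vertex comparison'' for short jumpers does not hold under the stated assumptions. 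Take $\mu$ supported (up to $\Aut(\mathbb{Z}^d)$-symmetry and translation) on the two-point hyper-edges $\{0,2j e_1\}$ with weights $2^{-j}$, together with nearest-neighbour edges of tiny weight to secure irreducibility. For $v=(n-1-\ell)e_1\in\mathcal{C}$ with $\ell\ge 2$ even, the hyper-edges $\{v,v+2j e_1\}$ with $2j>\ell$ are all jumpers over $\partial B(n)$, contributing mass $\asymp 2^{-\ell/2}$, while \emph{no} hyper-edge through $v$ of this type lands on $\partial B(n)$ (since $\ell+1$ is odd); the only $\partial B(n)$-touching mass through $v$ is the negligible nearest-neighbour contribution. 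So there is no universal constant $C$ with $J(v)\le C\,T(v)$, and summing over the (potentially many) boundary vertices of $\mathcal{C}$ cannot recover a bound of the form $Ck$. In short, the inequality $\mu(\mathcal{E}_n)\le Ck+o(1)$ on $B_n$ is the crux, and your sketch does not establish it; the rest of the bookkeeping is fine.
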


 Fourthly, from Lemma~\ref{lem: many truncated hyper-edges}, we could obtain the following analogy of \cite[Lemma~5.5]{ChangMR3692311}.
 \begin{lemma}\label{lem: many truncated connections to the outer box}
 For $\alpha>0$, we define $U(m,n,\alpha)$ be the set of vertices on $\partial B(n)$ which could be connected to $B(m)$ via
 \[\{h\in\mathcal{G}_{\alpha}:h\cap B(n-1)\neq\emptyset,h\subset B(\lfloor\lambda n\rfloor-1)\}.\]
 Then, for all fixed $k\geq 1$,
 \begin{equation}\label{eq: mtcttob0}
 \lim\limits_{n\rightarrow\infty}\mathbb{P}[\sharp U(m,n,\alpha)\leq k,B(m)\overset{\mathcal{G}_{\alpha}}{\longleftrightarrow} \partial B(f(n))]=0,
 \end{equation}
 where $B(m)\overset{\mathcal{G}_{\alpha}}{\longleftrightarrow} \partial B(f(n))$ means that $B(m)$ and $\partial B(f(n))$ are connected via the hyper-edges in $\mathcal{G}_{\alpha}$.
 \end{lemma}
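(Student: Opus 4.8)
\emph{Proof strategy.}
The plan is to bootstrap from Lemma~\ref{lem: many truncated hyper-edges}, which produces many crossing hyper-edges \emph{as measured by $\mu$}, to a statement about many \emph{distinct boundary vertices}, by means of a deterministic peeling that exploits the local finiteness \eqref{eq: local finiteness of general mu} of $\mu$. First set $M:=\mu(\{h:0\in h\})<\infty$; by translation invariance \eqref{eq: translation invariance of mu}, $\mu(\{h:v\in h\})=M$ for every vertex $v$. Throughout, $u=1-e^{-\alpha}$.

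The heart of the argument is a conditional estimate. Fix a large $n$ and condition on the $\sigma$-algebra $\mathcal F_n$ generated by the statuses of the hyper-edges contained in $B(n-1)$; this determines the cluster $\mathcal C(m,n,\alpha)$, hence the family $\mathcal A:=\{h:h\subset B(\lfloor\lambda n\rfloor-1),\ h\cap\partial B(n)\neq\emptyset,\ h\cap\mathcal C(m,n,\alpha)\neq\emptyset\}$ and its mass $\mu(\mathcal A)=\mu(m,n,\alpha)$. Every $h\in\mathcal A$ meets $\partial B(n)$, so $h\not\subset B(n-1)$; hence, conditionally on $\mathcal F_n$, the statuses of the hyper-edges in $\mathcal A$ are still independent with $\mathbb P[h\in\mathcal G_\alpha]=p_h(u)$. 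I would enumerate $\partial B(n)=\{v^{(1)},v^{(2)},\dots\}$, put $\mathcal A_v:=\{h\in\mathcal A:v\in h\}$, and peel off $\mathcal B_{v^{(i)}}:=\mathcal A_{v^{(i)}}\setminus\bigcup_{l<i}\mathcal A_{v^{(l)}}$. These collections are pairwise disjoint, they partition $\mathcal A$ (each $h\in\mathcal A$ contains some boundary vertex), and $\mu(\mathcal B_{v^{(i)}})\le\mu(\mathcal A_{v^{(i)}})\le M$. Since $t\mapsto 1-(1-u)^t$ is concave and vanishes at $0$, we get $\sum_i\bigl(1-(1-u)^{\mu(\mathcal B_{v^{(i)}})}\bigr)\ge c_u\sum_i\mu(\mathcal B_{v^{(i)}})=c_u\,\mu(m,n,\alpha)$, with $c_u:=(1-(1-u)^M)/M>0$. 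If some $h\in\mathcal B_{v^{(i)}}$ is open, then concatenating an open path inside $B(n-1)$ from $B(m)$ to a vertex of $h\cap\mathcal C(m,n,\alpha)$ with $h$ itself shows $v^{(i)}\in U(m,n,\alpha)$; the $v^{(i)}$ are distinct and the events $E_i:=\{\mathcal B_{v^{(i)}}\cap\mathcal G_\alpha\neq\emptyset\}$ are conditionally independent, so $\sharp U(m,n,\alpha)\ge Z:=\sum_i\mathbf 1_{E_i}$, a sum of conditionally independent Bernoullis with $\mathbb E[Z\mid\mathcal F_n]\ge c_u\,\mu(m,n,\alpha)$. A standard Chernoff bound then gives, for every $k'$ large enough in terms of $k,u,M$ and every $n$,
\[
\mathbb P\bigl[\sharp U(m,n,\alpha)\le k,\ \mu(m,n,\alpha)>k'\bigr]\le \mathbb E\bigl[\mathbf 1_{\{\mu(m,n,\alpha)>k'\}}\,\mathbb P[Z\le k\mid\mathcal F_n]\bigr]\le e^{-c_u k'/8}.
\]

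To conclude, fix $\varepsilon>0$ and choose such a $k'$ with $e^{-c_u k'/8}<\varepsilon$. Using the inclusion $\{B(m)\overset{\mathcal{G}_\alpha}{\longleftrightarrow}\partial B(f(n))\}\subset\{B(m)\overset{B(n-1)}{\centernot\longleftrightarrow}\partial B(f(n))\}\cup\{B(m)\overset{(\mathcal G_\alpha)^{B(n-1)}}{\longleftrightarrow}\partial B(f(n))\}$ (recall $f(n)<n$) together with the previous display,
\begin{align*}
\mathbb P\bigl[\sharp U(m,n,\alpha)\le k,\ B(m)\overset{\mathcal{G}_\alpha}{\longleftrightarrow}\partial B(f(n))\bigr]
&\le \mathbb P\bigl[B(m)\overset{\mathcal{G}_\alpha}{\longleftrightarrow}\partial B(f(n)),\ B(m)\overset{B(n-1)}{\centernot\longleftrightarrow}\partial B(f(n))\bigr]\\
&\quad+\mathbb P\bigl[B(m)\overset{(\mathcal G_\alpha)^{B(n-1)}}{\longleftrightarrow}\partial B(f(n)),\ \mu(m,n,\alpha)\le k'\bigr]+\varepsilon.
\end{align*}
The first term tends to $0$ by Lemma~\ref{lem: truncated crossing general mu} and the second by Lemma~\ref{lem: many truncated hyper-edges}; taking $\limsup_{n\to\infty}$ yields $\le\varepsilon$, and letting $\varepsilon\downarrow0$ finishes the proof.

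I expect the peeling step to be the only genuine obstacle: a naive union bound over $k$-subsets of $\partial B(n)$ fails because $\sharp\partial B(n)$ grows with $n$, so one really has to convert ``large $\mu$-mass of crossing hyper-edges'' into ``many independently-likely distinct boundary vertices'', and it is exactly the bound $\mu(\{h:v\in h\})\le M$ from \eqref{eq: local finiteness of general mu} that makes the disjoint peeling, and hence the Chernoff estimate, possible. Everything else is the same repackaging as in \cite[Lemma~5.5]{ChangMR3692311}.
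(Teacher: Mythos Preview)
Your proof is correct and follows the same global architecture as the paper's: condition on the inside configuration to fix $\mathcal C(m,n,\alpha)$ and $\mu(m,n,\alpha)$, exploit conditional independence of the boundary-crossing hyper-edges, and then reduce to Lemmas~\ref{lem: truncated crossing general mu} and~\ref{lem: many truncated hyper-edges} exactly as you do in your final decomposition.

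The genuine difference is in the conversion ``large $\mu$-mass of crossing hyper-edges $\Rightarrow$ many vertices in $U(m,n,\alpha)$''. The paper uses the bound $\sup_h\mu(\{h\})<\infty$ to split the family $\{h:h\subset B(\lfloor\lambda n\rfloor-1),\,h\cap\partial B(n)\neq\emptyset,\,h\cap\mathcal C(m,n,\alpha)\neq\emptyset\}$ into $k$ disjoint subfamilies of large $\mu$-mass, obtains at least $k$ \emph{distinct open hyper-edges} with high probability, and then asserts (without further detail) that many distinct open crossing hyper-edges force many boundary vertices. Your peeling instead partitions the same family by the first boundary vertex hit, using the local-finiteness bound $\mu(\{h:v\in h\})\le M$ from~\eqref{eq: local finiteness of general mu}; this produces conditionally independent events $E_i$ indexed \emph{directly} by distinct vertices of $\partial B(n)$, so the Chernoff bound lands on $\sharp U(m,n,\alpha)$ with no intermediate step. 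Your route is slightly longer to write but cleaner: it bypasses the passage through $\sharp\widetilde{\mathcal O}(m,n,\alpha)$ and makes fully explicit the step that the paper leaves as a one-line claim. Both arguments are valid; yours trades the per-edge bound for the per-vertex bound and gains a more direct link to the target quantity.
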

 The proof of \cite[Lemma~5.5]{ChangMR3692311} does not work and we provide a different approach.
 \begin{proof}
  Denote by $\mathcal{O}(m,n,\alpha)$ the following multi-set of hyper-edges with multiplicities: \[\{h\in\mathcal{G}_{\alpha}: h\subset B(\lfloor\lambda n\rfloor-1), h\cap\partial B(n)\neq\emptyset, h\cap\mathcal{C}(m,n,\alpha)\neq\emptyset\},\]
  where $\mathcal{C}(m,n,\alpha)$ is defined in Lemma~\ref{lem: many truncated hyper-edges}. Conditionally on $\mathcal{C}(m,n,\alpha)$, the number $\sharp\mathcal{O}(m,n,\alpha)$ with multiplicities is a Poisson random variable with parameter $\mu(m,n,\alpha)$. Hence, when $\mu(m,n,\alpha)$ is large, $\sharp\mathcal{O}(m,n,\alpha)$ is also large with high probabilities. Let $\widetilde{\mathcal{O}}(m,n,\alpha)$ be the set obtained from $\mathcal{O}(m,n,\alpha)$ by removing duplicated hyper-edges. We want to show that $\widetilde{\mathcal{O}}(m,n,\alpha)$ is also large with high probabilities. Indeed, this is true since $\sup_{h\in\mathcal{H}}\mu(\{h\})<\infty$ by \eqref{eq: local finiteness of general mu} and the translation-invariance. For $k\geq 1$, when $\mu(m,n,\alpha)$ is large, we can split the hyper-edges $\{h:h\subset B(\lfloor \lambda n\rfloor-1),h\cap\partial B(n)\neq\emptyset,h\cap\mathcal{C}(m,n,\alpha)\neq\emptyset\}$ into $k$ disjoint subsets such that each set has a large enough total weight under $\mu$. Then, with high probabilities, each such set contains at least one hyper-edges in $\mathcal{G}_{\alpha}$, which implies $\sharp\widetilde{\mathcal{O}}(m,n,\alpha)\geq k$. Here, $k$ could be sufficiently large as long as $\mu(m,n,\alpha)$ is sufficiently large. Finally, $\sharp U(m,n,\alpha)$ has to be large if $\sharp\widetilde{\mathcal{O}}(m,n,\alpha)$ is sufficiently large. Hence, we have that
  \begin{equation*}
 \lim\limits_{n\rightarrow\infty}\mathbb{P}[\sharp U(m,n,\alpha)\leq k,B(m)\overset{(\mathcal{G}_{\alpha})^{B(n-1)}}{\longleftrightarrow} \partial B(f(n))]=0,
 \end{equation*}
 which implies \eqref{eq: mtcttob0} by Lemma~\ref{lem: truncated crossing general mu}.
 \end{proof}
 By using similar argument, we obtain the following analogy of \cite[Lemma~5.6]{ChangMR3692311} from Lemma~\ref{lem: many truncated connections to the outer box}.
 \begin{lemma}\label{lem: a connection to a seed}
 If $\alpha>\alpha_c=-\ln(1-u_c)$, for all $\eta>0$, there exist $m=m(d,\alpha,\eta)$ and $n=n(d,\alpha,\eta)>2m$ such that
 \begin{equation*}
  \mathbb{P}[B(m)\overset{(\mathcal{G}_{\alpha})_{B(n-1)}^{B(\lfloor \lambda n\rfloor-1)}}{\longleftrightarrow} K(m,n,\alpha)]>1-\eta,
 \end{equation*}
 where $(\mathcal{G}_{\alpha})_{B(n-1)}^{B(\lfloor \lambda n\rfloor-1)}=\{h\in \mathcal{G}_{\alpha}:h\cap B(n-1)\neq\emptyset, h\subset B(\lfloor \lambda n\rfloor-1)\}$.
 \end{lemma}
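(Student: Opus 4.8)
The plan is to deduce the existence of a connection to a seed from the abundance of connections to the boundary $\partial B(n)$ provided by Lemma~\ref{lem: many truncated connections to the outer box}, exactly in the spirit of \cite[Lemma~5.6]{ChangMR3692311}. Since $\alpha>\alpha_c$, the origin (hence $B(m)$ for $m\geq 1$) lies in an infinite cluster of $\mathcal{G}_\alpha$ with probability bounded below by some $\theta=\theta(\alpha)>0$; in particular, for every $m$, $\mathbb{P}[B(m)\overset{\mathcal{G}_\alpha}{\longleftrightarrow}\partial B(f(n))]\to\mathbb{P}[B(m)\overset{\mathcal{G}_\alpha}{\longleftrightarrow}\infty]>0$ as $n\to\infty$. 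Combining this with Lemma~\ref{lem: many truncated connections to the outer box}, we get that for any $k$, for $n$ large, with probability at least $\theta/2$ say, the set $U(m,n,\alpha)$ of vertices on $\partial B(n)$ connected to $B(m)$ through hyper-edges in $\{h\in\mathcal{G}_\alpha:h\cap B(n-1)\neq\emptyset,\ h\subset B(\lfloor\lambda n\rfloor-1)\}$ has cardinality at least $k$.

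Next I would argue that many well-separated points of $\partial B(n)$ connected to $B(m)$ force, with high conditional probability, at least one connection all the way to a $\beta$-seed inside $T(m,n)$. Here one uses the symmetry assumption \eqref{eq: mu invariance under aut Zd}: by reflecting, one may assume a positive fraction of $U(m,n,\alpha)$ lies in (a reflected copy of) the region $T(n)$, and then $T(m,n)=\bigcup_{j=0}^{2m}\{je_1+T(n)\}$ contains many disjoint translates of neighbourhoods of such points. For a fixed point $z$ on the face, the event that the modified box $B(z',m,c)$ centered near $z'\in T(m,n)$ becomes an $\alpha$-seed — i.e. every hyper-edge $h\subset B(z',m)$ with $\mu(\{h\})>0$ satisfies $h\in\mathcal{G}_\alpha$ — has a strictly positive probability $q=q(m,\alpha)>0$, by local finiteness \eqref{eq: local finiteness of general mu} (only finitely many such hyper-edges, each independently present with positive probability). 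These seed-forming events, for a collection of $\lfloor k/C\rfloor$ sufficiently separated candidate locations, depend on disjoint families of hyper-edges, hence are independent; so the probability that none of them forms a seed is at most $(1-q)^{\lfloor k/C\rfloor}$, which we make $<\eta/2$ by choosing $k$ large.

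The final assembly: choose $k$ so that $(1-q)^{\lfloor k/C\rfloor}<\eta/2$; then choose $m$ and $n>2m$ via Lemma~\ref{lem: many truncated connections to the outer box} so that $\mathbb{P}[\sharp U(m,n,\alpha)\geq k]>1-\eta/2$ (which is possible since the connection-to-$\partial B(f(n))$ event has probability bounded below). On the intersection of these two events there is both a point $z\in U(m,n,\alpha)$ near one of the chosen seed locations and a seed formed there; the hyper-edges realizing $z\in U$ lie in $(\mathcal{G}_\alpha)_{B(n-1)}^{B(\lfloor\lambda n\rfloor-1)}$, and one short additional family of open hyper-edges inside $B(z',m)$ (part of the seed event, again within $B(\lfloor\lambda n\rfloor-1)$) links $z$ into the seed $K(m,n,\alpha)$, giving $B(m)\overset{(\mathcal{G}_\alpha)_{B(n-1)}^{B(\lfloor\lambda n\rfloor-1)}}{\longleftrightarrow}K(m,n,\alpha)$ with probability $>1-\eta$.

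The step I expect to be the main obstacle is the bookkeeping in the second paragraph: one must verify that the hyper-edges used to produce the points of $U(m,n,\alpha)$ and those used to form the candidate seeds can genuinely be taken from \emph{disjoint} families, so that independence applies; because hyper-edges can be large, a single hyper-edge could a priori touch both a seed box and the exploration cluster. This is where the localization afforded by Lemma~\ref{lem: control of large hyper-edges} (large hyper-edges have small $\mu$-mass) is needed: on the complement of a negligible-probability event, no hyper-edge in play joins $B(f(n))$ to $\partial B(n)$, so connections to $U$ and the internal wiring of a seed box near $\partial B(n)$ use genuinely separate hyper-edges. Handling this cleanly, rather than the probabilistic estimates themselves, is the delicate part, and it is exactly the point where our proof must diverge from the loop-percolation argument of \cite{ChangMR3692311}.
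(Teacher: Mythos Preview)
Your overall scheme matches the paper's: derive the lemma from Lemma~\ref{lem: many truncated connections to the outer box} by producing many connection points on $\partial B(n)$ and then arguing that disjoint candidate seed boxes near those points each become seeds independently with probability $q(m,\alpha)>0$. However, you have misidentified where the real work lies.

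Your stated ``main obstacle''---independence between the hyper-edges producing $U(m,n,\alpha)$ and those defining the seed events---is a non-issue. The seeds in $K(m,n,\alpha)$ are modified boxes $B(z',m,c)$ lying inside $T(m,n)=\bigcup_{j=0}^{2m}\{je_1+T(n)\}$, which forces $z'_1=n+m$ and hence every vertex of $B(z',m)$ has first coordinate at least $n$. Thus any hyper-edge $h\subset B(z',m)$ satisfies $h\cap B(n-1)=\emptyset$, whereas every hyper-edge used to reach $U(m,n,\alpha)$ must intersect $B(n-1)$. The two families are automatically disjoint; no appeal to Lemma~\ref{lem: control of large hyper-edges} is needed here.

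The point the paper singles out as the genuine departure from \cite[Lemma~5.6]{ChangMR3692311} is purely geometric, and your proposal omits it. In loop percolation one tiles $T(n)$ by disjoint face-copies of $B(m)$, and each connection point of $V(m,n,\alpha)\subset T(n)$ falls into exactly one tile. Here the seed is the \emph{modified} box $B(z',m,c)$, so a connection point $z\in T(n)$ lies in the seed only if it lands in the modified face $F(m,c)$ (the face with first coordinate $n$), not in a removed corner. These modified faces $F(m,c)$ do not tile $T(n)$ disjointly. The paper's fix is to cover $T(n,c)$ by translates of $F(m,c)$ split into $2^{d-1}$ groups, each group consisting of pairwise disjoint copies; pigeonhole then gives one group capturing at least $\sharp V(m,n,\alpha)/2^{d-1}$ connection points, and within that group the seed events are independent. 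This is the step you need to add.

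A smaller but genuine gap: your parameter order is inconsistent. The seed probability $q=q(m,\alpha)$ depends on $m$, so you cannot choose $k$ from $(1-q)^{\lfloor k/C\rfloor}<\eta/2$ before fixing $m$. The correct order is to pick $m$ large so that $\mathbb{P}[B(m)\overset{\mathcal{G}_\alpha}{\longleftrightarrow}\infty]>1-\eta/4$ (not merely ``bounded below'' as in your parenthetical), then determine $q$ and $k$, and finally choose $n$ via Lemma~\ref{lem: many truncated connections to the outer box}.
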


 The main difference in the proof of Lemma~\ref{lem: a connection to a seed} from \cite[Lemma~5.6]{ChangMR3692311} is the definition of a seed event. In the loop percolation, if a box $B(x,m)$ is a $\beta$-seed, then each pair of vertices in $B(x,m)$ is connected by the collection of loops $(\mathcal{L}_{\beta})^{B(x,m)}$ inside $B(x,m)$. But for our hyper-edge percolation, since we have no restriction on the shape of hyper-edges, if we consider the collection of hyper-edges $(\mathcal{G}_{\beta})^{B(x,m)}$ inside $B(x,m)$ which is an analogy of $(\mathcal{L}_{\beta})^{B(x,m)}$, this collection of hyper-edges may not connect each pair of vertices in $B(x,m)$. For vertices in the corner of $B(x,m)$, we cannot ensure the connection. Thus, our seed event for the hyper-edge percolation is defined for modified box $B(x,m,c)$ which is the box $B(x,m)$ without $2^d$-many corners. In the proof of \cite[Lemma~5.6]{ChangMR3692311}, a relevant step is the following simple result: Consider certain set of vertices $V(m,n,\alpha)$ of one octant $T(n)$ of a face $F(n)$ of $B(n)$. Assume $2m+1$ divides $n+1$. Then, we may place $(n+1)^{d-1}/(2m+1)^{d-1}$-many disjoint copies of $B(m)$ on $T(n)$ side by side. If $\sharp V(m,n,\alpha)$ is sufficiently large, then we have sufficiently many disjoint copies of $B(m)$ with non-empty intersection with $V(m,n,\alpha)$. The disjointness is important as it ensures the independence between the loop soup inside different boxes. For our hyper-edge percolation, $V(m,n,\alpha)$ is defined analogously. However, we should use modified boxes $B(x,m,c)$ instead of the copies $B(x,m)$ of $B(m)$. We cannot cover $T(n)$ by disjoint faces of the modified boxes $B(x,m,c)$. Frankly speaking, this difference requires additional technical arguments. However, this is not a serious issue. Indeed, by removing the corners of $T(n)$, we obtain $T(n,c)$. Denote by $F(m,c)$ the face $F(m)$ without $2^{d-1}$-many corners, see Figure~\ref{fig: modified face}. Then, $F(m,c)$ is a face of $B(0,m,c)$. Note that $T(n,c)$ could be covered by copies of $F(m,c)$. Moreover, for sufficiently large $m$ and $n$, these copies can be divided into $2^{d-1}$-many groups such that the copies in each group is disjoint. See Figure~\ref{fig: cover without corners} for an illustration of such a covering. Accordingly, there exists a group such that the intersection of $V(m,n,\alpha)$ with the union of the copies in that group is at least $\sharp V(m,n,\alpha)/2^{d-1}$ and the proof follows.

 \begin{figure}[htbp]
 \centering
 \subfloat[]{
 \includegraphics[width=0.136\textwidth]{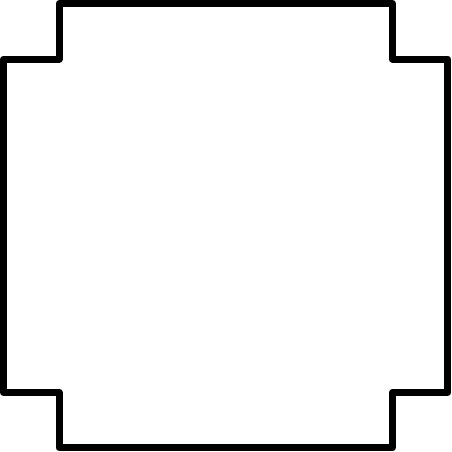}
 \label{fig: modified face}
 }
 \subfloat[]{
 \includegraphics[width=0.5\textwidth]{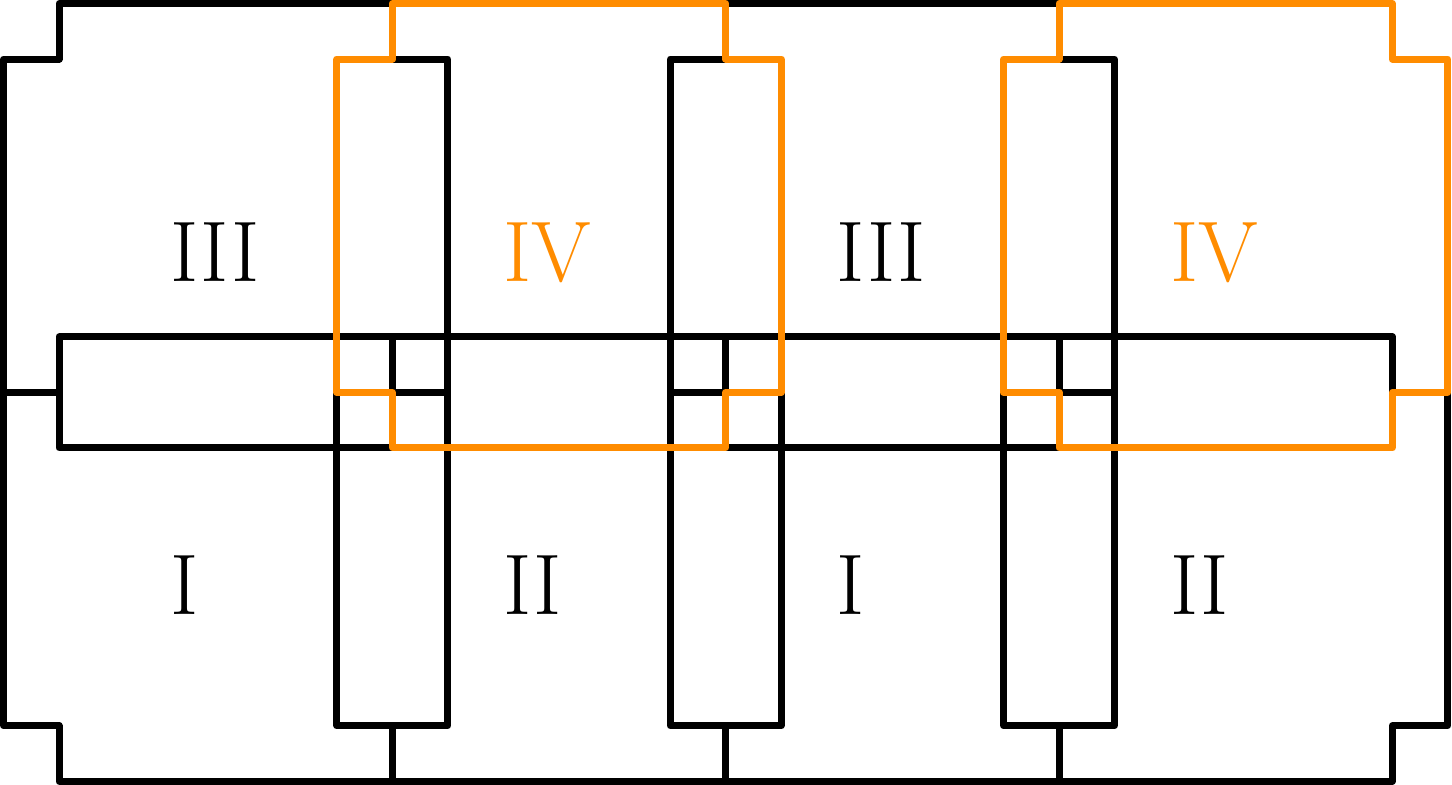}
 \label{fig: cover without corners}}
 \caption{Illustration of the covering, 4 groups}
 \end{figure}

 Finally, by similar argument as in the proof of \cite[Lemma~5.1]{ChangMR3692311}, we could deduce Lemma~\ref{lem: sprinkling for hyper-edge percolation} from Lemma~\ref{lem: a connection to a seed}.

\section{Appendix}

In this section, we prove Claim~\ref{claim: square loop percolates} in Remark~\ref{rem: necessarity on the finiteness of the annulus crossing}. The proof is a simple conclusion of a geometric consideration and the second Borel-Cantelli lemma. We will define a sequence of annulus crossing events on different scales. These events are independent by construction. The sum of their probabilities is infinite and hence, by Borel-Cantelli lemma, these events eventually happens. While by definition, this implies the existence of an infinite chain of square loops and the existence of an infinite cluster.

Without loss of generality, we assume that the underlying graph is $\mathbb{Z}^2$, i.e. $d=2$. Denote by $\mathcal{H}_n$ the set of square loops with side length $2\times 2^{n+1}$ such that their bottom-left corners are placed in the box $\{2^{n-1}+1,2^{n-1}+2,\ldots,2^{n}\}^2$. We illustrate a square loop in $\mathcal{H}_n$ in Figure~\ref{fig: one square loop}. Moreover, $\forall h_1\in\mathcal{H}_n$ and $\forall h_2\in\mathcal{H}_{n+1}$, they are connected, see Figure~\ref{fig: consecutive square loops}. Define
\[E_n=\{\exists h\in\mathcal{H}_n:h\text{ is open}\}.\]
When $\liminf_{n\to\infty}E_n$ happens, there exists an infinite open cluster that is an infinite sequence of connected loops. Note that
\begin{multline*}
 P(E_n^c)=P(\text{all the square loops in }\mathcal{H}_n\text{ are closed})\\
 =\prod_{h\in\mathcal{H}_n}P(h\text{ is closed})=(1-u)^{(n+1)2^{-2(n+1)}2^{2(n-1)}}.
\end{multline*}
Since $\sum_{n}P(E_n^c)<+\infty$ for $u>0$, $P(E_n^c\text{ i.o.})=0$. Hence, $P(\liminf_{n}E_n)=1$ for $u>0$. Therefore, for any $u>0$, with probability one, there is an infinite cluster.

\begin{figure}
\centering
\subfloat[]{
 \includegraphics[width=0.35\textwidth]{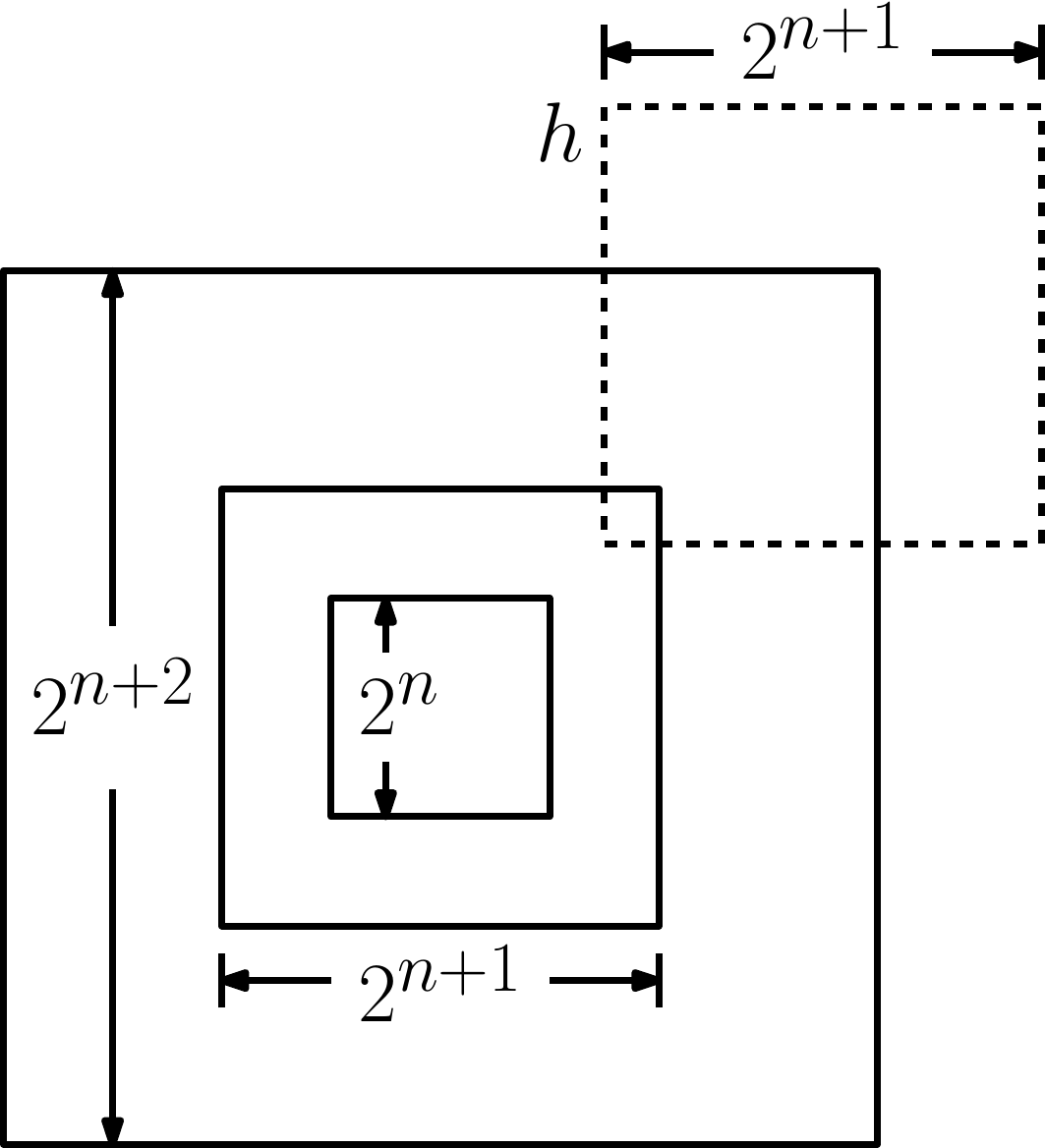}
 \label{fig: one square loop}}
\subfloat[]{
 \includegraphics[width=0.5\textwidth]{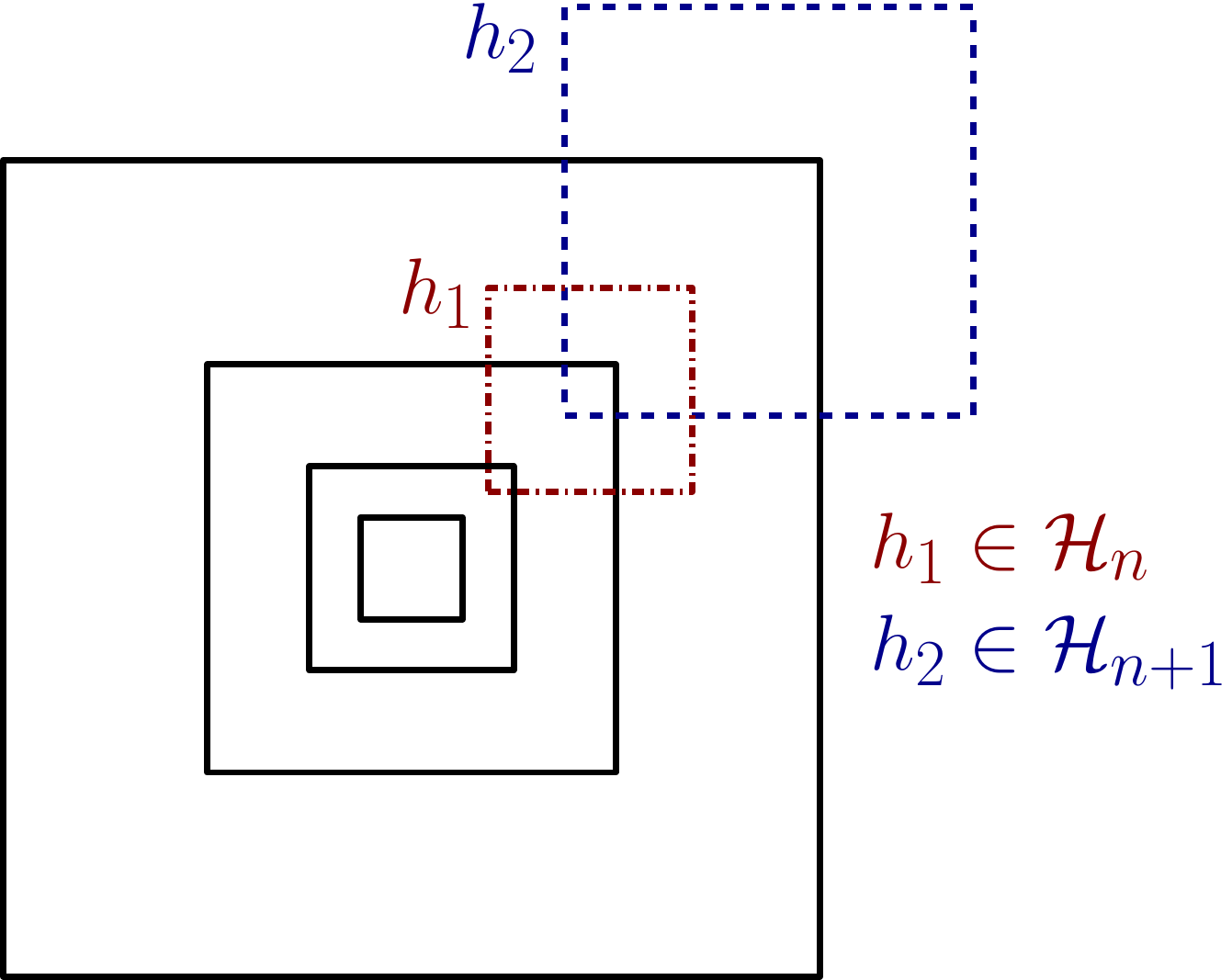}
 \label{fig: consecutive square loops}}
 \caption{Illustration of $\mathcal{H}_n$}
\end{figure}


\begin{thebibliography}{Cha17}

\bibitem[BK89]{BurtonKeaneMR990777}
R.~M. Burton and M.~Keane, \emph{Density and uniqueness in percolation}, Comm.
  Math. Phys. \textbf{121} (1989), no.~3, 501--505. \MR{990777}

\bibitem[Cha17]{ChangMR3692311}
Yinshan Chang, \emph{Supercritical loop percolation on {$\mathbb{Z}^d$} for
  {$d\geq 3$}}, Stochastic Process. Appl. \textbf{127} (2017), no.~10,
  3159--3186. \MR{3692311}

\bibitem[CS16]{loop-perc-Zd}
Yinshan Chang and Art\"{e}m Sapozhnikov, \emph{Phase transition in loop
  percolation}, Probab. Theory Related Fields \textbf{164} (2016), no.~3-4,
  979--1025. \MR{3477785}

\bibitem[GM90]{GrimmettMarstrandMR1068308}
G.~R. Grimmett and J.~M. Marstrand, \emph{The supercritical phase of
  percolation is well behaved}, Proc. Roy. Soc. London Ser. A \textbf{430}
  (1990), no.~1879, 439--457. \MR{1068308 (91m:60186)}

\bibitem[Gri99]{GrimmettMR1707339}
Geoffrey Grimmett, \emph{{Percolation}}, second ed., {Grundlehren der
  Mathematischen Wissenschaften [Fundamental Principles of Mathematical
  Sciences]}, vol. 321, Springer-Verlag, Berlin, 1999. \MR{1707339
  (2001a:60114)}

\bibitem[LSS97]{LiggettSchonmannStaceyMR1428500}
T.~M. Liggett, R.~H. Schonmann, and A.~M. Stacey, \emph{Domination by product
  measures}, Ann. Probab. \textbf{25} (1997), no.~1, 71--95. \MR{1428500}

\end{thebibliography}

\providecommand{\bysame}{\leavevmode\hbox to3em{\hrulefill}\thinspace}
\providecommand{\MR}{\relax\ifhmode\unskip\space\fi MR }
\providecommand{\MRhref}[2]{%
  \href{http://www.ams.org/mathscinet-getitem?mr=#1}{#2}
}
\providecommand{\href}[2]{#2}

\end{document}